\documentclass[a4paper,11pt]{amsart}
\usepackage[T1]{fontenc}
\usepackage[english]{babel}
\usepackage[cp1252]{inputenc}
\usepackage{amsthm}
\usepackage{amsmath}
\usepackage{amsfonts}
\usepackage{tikz-cd}
\usepackage{pgfplots}
 \usepackage{microtype}
\usepackage[margin=0.8in]{geometry}

\usepackage{xcolor}
 \pgfplotsset{compat=1.18}
\usepgfplotslibrary{fillbetween}

\usepackage{amssymb}
\usepackage{hyperref}
\usepackage{color}
\usepackage{caption}
\usepackage{indentfirst}
\usepackage{amssymb}
\usepackage{eufrak}
\usepackage{mathrsfs}
\usepackage{xypic}

 \usepackage{booktabs,array,graphicx,longtable}
\usepackage{array,booktabs,tabularx}

\usepackage{booktabs}
\usepackage{listings}
\theoremstyle{plain}   

\usepackage{booktabs}
\usepackage{graphicx}
\usepackage{array}

\definecolor{amoebadarkblue}{RGB}{0,63,127}
\definecolor{contourred}{RGB}{224,0,0}

\newif\ifhighdensityplot
\highdensityplotfalse

\definecolor{ContourColor}{RGB}{100,0,10}

\newcommand{\calA}{\mathscr A}
\newcommand{\calC}{\mathcal C}

\newcommand{\R}{\mathbb R}
\newcommand{\C}{\mathbb C}

\newcommand{\Log}{\mathrm{Log}\,}

 \newcommand{\Rea}{\operatorname{Re}}
 \newcommand{\Ima}{\operatorname{Im}}

\newcommand{\PP}{\mathbb P}

\newcommand{\di}{\displaystyle}

\newcommand{\T}{(\mathbb C^*)}

\newcommand{\Crit}{\operatorname{Crit}}

\newcommand{\rank}{\operatorname{rank}}

\newcommand{\supp}{\operatorname{supp}}
\newcommand{\CA}{\mathcal C\mathcal A}

\newcommand{\Newt}{\operatorname{Newt}}

\usepackage{array}
\usepackage{tabularx}

 \usepackage{caption}
 
\newcommand{\Sing}{\operatorname{Sing}}

\newcommand\restr[2]{{
  \left.\kern-\nulldelimiterspace 
  #1 
  \right|_{#2} 
  }}

\newtheorem{remark}{Remark}[section]
\newtheorem*{mtheorem*}{Main Theorem}
\newtheorem{theorem}{Theorem}[section]

\newtheorem{proposition}{Proposition}[section]
\newtheorem{corollary}{Corollary}[section]
\newtheorem{lemma}{Lemma}[section]

\begin{document}
\title{Singularities of Amoeba Contours}
\author{Mounir Nisse}

\address{Mounir Nisse\\
Department of Mathematics, Xiamen University Malaysia, Jalan Sunsuria, Bandar Sunsuria, 43900, Sepang, Selangor, Malaysia.
}
\email{mounir.nisse@gmail.com, mounir.nisse@xmu.edu.my}

\date{}
 
\thanks{This research is supported in part by Xiamen University Malaysia Research Fund (Grant no. XMUMRF/ 2024-C5/IMAT/0013).}

\subjclass[2020]{14Q30, 14T90, 14P10, 58K05}
 
\keywords{Amoeba, amoeba contour, logarithmic map, logarithmic Gauss map,
critical locus, contour singularity,  real algebraic geometry,
semialgebraic projection, maximally sparse polynomial.}

\maketitle

\begin{abstract}
We give explicit real-algebraic equations for computing the singularities of amoeba contours, separating degeneracies of the logarithmic critical locus from coincidences of distinct critical lifts.  For plane curves, this yields practical systems detecting nodes, cusps, and multiple branches.  We also show that maximal sparsity does not maximize contour singularities, even for lattice triangles and parallelograms.
\end{abstract}

\section*{Introduction}

Amoebas connect complex algebraic geometry with real and tropical geometry by sending a subvariety of $(\mathbb C^*)^n$ to its image under the logarithmic map.  Their geometry reflects both the equations of the original variety and the combinatorics of its Newton polytope.  Since the early developments of the subject, amoebas and their contours have played an important role in the study of real algebraic curves, tropical limits, and decompositions of complex hypersurfaces \cite{Mikhalkin00,Mikhalkin04}.  The logarithmic Gauss map gives a natural description of the critical locus of the logarithmic map and provides a direct link between the geometry of the variety and the geometry of its amoeba \cite{MadaniNisse13}.

The contour of an amoeba is the set of critical values of the logarithmic map restricted to the corresponding algebraic hypersurface.  Even when the hypersurface is smooth, its contour may have singularities.  These singularities have two different sources.  A singular value may come from a critical point at which the critical locus is singular or at which its logarithmic image fails to be regular.  It may also come from two or more distinct critical points having the same logarithmic image.  The second situation produces self-intersections or multiple branches, whereas the first may produce cusp-type behavior.  This distinction is closely related to the classical study of singularities of smooth mappings \cite{Whitney55,Hirsch76}, but the logarithmic setting requires equations adapted to the algebraic torus and to the logarithmic Gauss map.

A principal motivation for this work comes from the study of amoeba
contours initiated by Lang, Shapiro, and Shustin
\cite{LangShapiroShustin21}.  Their paper investigates the maximal number
of intersections between the contour of a plane and hypersurface amoeba and a real line,
introduces the corresponding real degree, and establishes bounds in terms
of the Newton polygon.  The present work is motivated by their results and
by the questions raised there.  It shifts the emphasis from intersections
with auxiliary lines to the intrinsic singularities of the contour and
develops bounds for the number of cusps and for transverse
$s$-fold nodes.

The first main result of this paper gives explicit polynomial equations for the logarithmic critical locus of a smooth hypersurface in $(\mathbb C^*)^n$.  We then derive Jacobian and determinant conditions that detect singular points of the critical locus and points where the logarithmic map restricted to that locus loses rank.  These equations describe the source-degenerate part of the singular contour.  To detect the second source of singularities, we construct a multiple-lift system consisting of two copies of the critical equations together with equations imposing equality of the logarithmic radii.  The diagonal, which corresponds to using the same critical point twice, is removed by an exact off-diagonal condition.

The central theorem combines these systems and gives a precise computation of the singular contour.  Under natural local properness and finite-fiber assumptions, every singular contour point comes either from a source-degenerate critical lift or from distinct critical lifts with the same logarithmic value.  Conversely, after the local contour branches have been checked, the positive real projection of these systems gives exactly the singular locus.  An important part of the result is the distinction between the actual real projection and the zero set of an elimination ideal.  Ordinary elimination gives the Zariski closure of a projection and may introduce additional points.  The exact real inequalities, the logarithmic Gauss charts, the positivity of the radius variables, and the off-diagonal condition must therefore be retained.  The required real-algebraic tools are standard in real algebraic geometry \cite{BochnakCosteRoy98}, but their use here produces a concrete method adapted specifically to singular amoeba contours.

For plane curves, all the equations become explicit in four real coordinates.  We obtain three systems: a singular-Jacobian system for singular points of the critical curve, an immersion-degeneracy system for critical points of its logarithmic parametrization, and a two-lift system for distinct critical points with the same logarithmic image.  These systems give a direct procedure for computing the singular values and for separating cusp-type values from multiple-branch values.

The method is applied in detail to the smooth curve $z^2w^2+9w+z+3zw+1=0$.  The computation gives seven singular logarithmic values.  Four arise from pairs of real critical points related by changes of signs and admit exact expressions.  Two further values arise from a real critical lift and a nonreal critical lift having the same logarithmic image.  The remaining value is produced by a degenerate nonreal critical lift and is of cusp type.  This example shows that the singular contour cannot be recovered merely by studying the real part of the curve: nonreal critical points make essential contributions.

The final result concerns the relation between sparsity and contour singularities.  Let $\Delta=\operatorname{conv}\{(0,0),(3,0),(0,1)\}$.  Every polynomial supported exactly on the vertices of $\Delta$ is reduced, by monomial changes of coordinates, to a line; hence its contour has no finite singular point.  However, the polynomial $w-1+z-z^3$ has Newton polygon $\Delta$ and its contour has an ordinary transverse node at $(0,0)$.  Therefore, the vertex-supported subfamily does not attain the largest possible number of isolated contour singularities among polynomials with Newton polygon $\Delta$.  The same phenomenon occurs for parallelogram Newton polygons: adding nonvertex monomials can create contour singularities that are absent in the corresponding vertex-supported family.  Thus maximal sparsity does not, in general, maximize the number of contour singularities, either for lattice triangles or for parallelograms.

These results give a complete set of equations for the two basic sources of singular contour points, clarify the role of exact real projection, and provide explicit examples in which nonreal critical lifts and nonvertex monomials change the singular geometry of the contour.  They also show that the Newton polygon alone does not determine how singular the contour can be: the choice of monomials and coefficients remains essential.

All numerical computations were carried out in Python, using primarily NumPy, SciPy, SymPy, and Matplotlib, while the figures were produced with PGFPlots and \LaTeX{} from the resulting data files.

{\it Acknowledgements.}
The author is grateful to Boris Shapiro for kindly sharing his joint paper with Lionel Lang and Eugenii Shustin. The present work was motivated by that paper and, in particular, by the questions raised therein.

\section{Equations and Jacobian Criteria for Singularities of Amoeba Contours}
 
Let $f\in\C[z_1^{\pm1},\ldots,z_n^{\pm1}]$ be a Laurent polynomial and assume that the hypersurface $V=\{z\in(\C^\ast)^n\mid f(z)=0\}$ is smooth. Its Newton polytope is denoted by  $\Delta=\Newt(f)$. We consider the logarithmic map $\Log:(\C^\ast)^n\to\R^n$, defined by $\Log(z_1,\ldots,z_n)=(\log|z_1|,\ldots,\log|z_n|)$, and its restriction $L=\Log|_V$. The critical locus of $L$ is denoted by $\Sigma=\Crit(L)$, and the contour of the amoeba to $V$ is denoted by $\CA_V=L(\Sigma)$.

For $1\leq j\leq n$, set $D_j=z_j\partial_{z_j}$ and $F_j=D_jf=z_jf_{z_j}$. Since $V$ is smooth and every coordinate $z_j$ is nonzero on $(\C^\ast)^n$, the functions $F_1,\ldots,F_n$ do not vanish simultaneously on $V$. The logarithmic Gauss map is therefore the holomorphic map
$$
\gamma_{\log}:V\longrightarrow\PP^{n-1},\qquad
\gamma_{\log}(z)=[F_1(z):\cdots:F_n(z)].
$$
For hypersurfaces, the logarithmic Gauss map records the projective normal direction after passing to local holomorphic logarithmic coordinates. The relation between this map and the logarithmic critical locus is due to Mikhalkin in the hypersurface case \cite{Mikhalkin04}.

\begin{theorem}[Polynomial equations for the critical locus]
Let $V\subset(\C^\ast)^n$ be a smooth hypersurface defined by $f$. Then
$
\Sigma=\Crit(\Log|_V)=\gamma_{\log}^{-1}(\R\PP^{n-1}),
$
which is equivalent to say that a point $q\in V$ lies in $\Sigma$ if and only if
$$
F_j(q)\overline{F_k(q)}-\overline{F_j(q)}F_k(q)=0
$$
for every $1\leq j<k\leq n$. In real coordinates $z_j=x_j+iy_j$, the critical locus is therefore a real algebraic subset of $(\R^2\setminus\{0\})^n$ defined by
$
\Rea f=0,\,  \Ima f=0,\, 
\Ima\!\left(F_j\overline{F_k}\right)=0
$
for all $1\leq j<k\leq n$.

On the affine chart $U_\ell=\{q\in V\mid F_\ell(q)\neq0\}$, it is enough to impose the $n-1$ equations
$$
\rho_j^{(\ell)}=\Ima\!\left(F_j\overline{F_\ell}\right)=0,
\qquad j\neq\ell.
$$
Thus
$$
\Sigma\cap U_\ell
=
\left\{
q\in U_\ell
\ \middle|\ 
\rho_j^{(\ell)}(q)=0\text{ for every }j\neq\ell
\right\}.
$$
\end{theorem}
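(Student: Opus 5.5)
The plan is to compute the real differential of $L=\Log|_V$ at a point $q\in V$ directly, and to identify its rank defect with the reality of the normal direction $[F_1(q):\cdots:F_n(q)]$. We work locally with holomorphic logarithmic coordinates. Near $q$, choose a branch $u=(u_1,\ldots,u_n)$ with $z_j=e^{u_j}$; then $\Log$ becomes $u\mapsto\Rea u$. In these coordinates $V$ is the complex hypersurface $g(u)=f(e^{u})=0$, and $\partial g/\partial u_j=z_jf_{z_j}=F_j$. Since $V$ is smooth and all $z_j\neq 0$, the vector $F(q)=(F_1(q),\ldots,F_n(q))$ is nonzero. Hence $T_qV=\{v\in\C^n:\sum_j F_j(q)v_j=0\}$ is a complex hyperplane of real dimension $2n-2$, and $dL_q(v)=\Rea v$.

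The key step is to decide when $\Rea:T_qV\to\R^n$ fails to be surjective. Since $\dim_\R T_qV=2n-2\ge n-1$, the point $q$ is critical exactly when the image has dimension $\le n-1$. Surjectivity fails if and only if some nonzero $a\in\R^n$ annihilates the image, i.e.\ $\sum_j a_j\Rea v_j=\Rea\sum_j a_jv_j=0$ for all $v\in T_qV$. Because $T_qV$ is complex linear, we may replace $v$ by $iv$, so this is equivalent to $\sum_j a_jv_j=0$ on $T_qV$. That means the complex functional $a$ is proportional to $F(q)$, i.e.\ $F(q)=\lambda a$ for some $\lambda\in\C^*$. This gives $\Sigma=\gamma_{\log}^{-1}(\R\PP^{n-1})$. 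This linear-algebra step, especially the use of complex linearity to pass from real to complex annihilation, is where care is needed; everything else is bookkeeping.

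Next we translate "$F(q)\in\C^*\cdot\R^n$" into the stated equations. If $F=\lambda a$ with $a$ real, then $F_j\overline{F_k}=|\lambda|^2a_ja_k$ is real for all $j,k$. Conversely, suppose all $F_j\overline{F_k}$ are real, and pick $\ell$ with $F_\ell\neq0$. Then $F_j/F_\ell=F_j\overline{F_\ell}/|F_\ell|^2\in\R$ for every $j$, so $F=F_\ell\cdot(F_j/F_\ell)_j$ is a complex multiple of a real vector. Since $F_j\overline{F_k}-\overline{F_j}F_k=2i\,\Ima(F_j\overline{F_k})$, the pairwise conditions are exactly $\Ima(F_j\overline{F_k})=0$.

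The same argument proves the chart statement. On $U_\ell$, the $n-1$ conditions $\rho^{(\ell)}_j=0$ already force every ratio $F_j/F_\ell$ to be real, and the remaining conditions $\Ima(F_j\overline{F_k})=0$ then follow automatically. Finally, writing $z_j=x_j+iy_j$, the functions $f$, $F_j$ and $\overline{F_k}$ are polynomial in $x,y$ and in $(x_j^2+y_j^2)^{-1}$ (from the negative exponents). Clearing these nonvanishing denominators shows that $\Rea f$, $\Ima f$ and $\Ima(F_j\overline{F_k})$ define a real algebraic subset of $(\R^2\setminus\{0\})^n$.
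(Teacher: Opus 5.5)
Your proof is correct and follows the paper's argument essentially step for step. Local holomorphic logarithmic coordinates identify $dL_q$ with $\Rea$ restricted to the hyperplane $\sum_j F_j(q)v_j=0$; invariance of that hyperplane under multiplication by $i$ upgrades a real annihilating covector to a complex functional proportional to $F(q)$; and the chart equations $\Ima(F_j\overline{F_\ell})=0$ simply say that the ratios $F_j/F_\ell$ are real. The only slip is cosmetic: the dimension count that makes ``critical'' mean ``not surjective'' is $2n-2\ge n$ (so $n\ge 2$), not $2n-2\ge n-1$.
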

\begin{proof}
Fix $q\in V$ and choose a local holomorphic logarithm $w_j=\log z_j$ near $q$. In the coordinates $w=(w_1,\ldots,w_n)$, the hypersurface $V$ is locally defined by $\widetilde f(w)=f(e^{w_1},\ldots,e^{w_n})=0$, and the chain rule gives $\partial\widetilde f/\partial w_j=F_j$. Hence the complex tangent hyperplane to the logarithmic transform of $V$ at $q$ is
$$\di
H_q=
\{
v\in\C^n
 \mid
\sum_{j=1}^nF_j(q)v_j=0
\}.
$$
The differential of $\Log|_V$ is identified with the restriction of the real-part map $\Rea:\C^n\to\R^n$ to $H_q$.

Let $a=(a_1,\ldots,a_n)\in\C^n\setminus\{0\}$ and $H_a=\{v\in\C^n\mid \sum a_jv_j=0\}$. The map $\Rea|_{H_a}$ is not surjective if and only if $[a_1:\cdots:a_n]\in\R\PP^{n-1}$. Indeed, if $a=\lambda r$ with $\lambda\in\C^\ast$ and $r\in\R^n\setminus\{0\}$, then $\Rea(H_a)=r^\perp$, so the rank is $n-1$. Conversely, if $\Rea(H_a)$ is a proper subspace of $\R^n$, choose $r\in\R^n\setminus\{0\}$ annihilating it. Since $H_a$ is invariant under multiplication by $i$, the identities $r\cdot\Rea(v)=0$ and $r\cdot\Rea(iv)=0$ imply $r\cdot v=0$ for all $v\in H_a$. Thus the complex linear forms $a\cdot v$ and $r\cdot v$ have the same kernel and are proportional. Therefore $[a]\in\R\PP^{n-1}$.
Applying this linear-algebra statement to $a=(F_1(q),\ldots,F_n(q))$ proves
$$
q\in\Crit(\Log|_V)
\quad\Longleftrightarrow\quad
\gamma_{\log}(q)\in\R\PP^{n-1}.
$$
 
A projective point $[F_1:\cdots:F_n]$ is real if and only if all nonzero coordinates have one common complex phase. This is equivalent to $F_j\overline{F_k}-\overline{F_j}F_k=0$ for all $j<k$. On the chart $F_\ell\neq0$, it is enough to require $F_j/F_\ell\in\R$ for every $j\neq\ell$, and this is exactly the condition $\Ima(F_j\overline{F_\ell})=0$. The stated real equations follow after writing $z_j=x_j+iy_j$ and replacing every Laurent monomial by an ordinary real rational expression; multiplication by a sufficiently large monomial clears all denominators without changing the zero set inside $(\C^\ast)^n$.
\end{proof}

\begin{remark}
The equations $\rho_j^{(\ell)}=0$ depend on the chosen chart, but their common zero set does not. For symbolic computation it is usually preferable to cover $V$ by the charts $F_\ell\neq0$, because the full family $\Ima(F_j\overline{F_k})=0$ contains many algebraic dependencies.
\end{remark}

\subsection{The Jacobian criterion for the restricted logarithmic map}

Fix an index $\ell$ and work on $U_\ell$. Write $z_j=x_j+iy_j$ and let $X=(x_1,y_1,\ldots,x_n,y_n)$ be the real coordinates on $(\C^\ast)^n$. Define the real-valued functions
$
g_1=\Rea f,\,  g_2=\Ima f,
$
and choose an ordering $j_1,\ldots,j_{n-1}$ of $\{1,\ldots,n\}\setminus\{\ell\}$. Put
$
g_{2+r}=\rho_{j_r}^{(\ell)}
=\Ima\!\left(F_{j_r}\overline{F_\ell}\right),
\,  1\leq r\leq n-1.
$
Let $G_\ell=(g_1,\ldots,g_{n+1}):U_\ell\to\R^{n+1}$. Then $\Sigma\cap U_\ell=G_\ell^{-1}(0)$.
The differential of the ambient logarithmic map is the $n\times2n$ matrix
$$
D\Log(X)=
\begin{pmatrix}
\dfrac{x_1}{x_1^2+y_1^2} & \dfrac{y_1}{x_1^2+y_1^2} & 0 & 0 & \cdots & 0 & 0\\
0 & 0 & \dfrac{x_2}{x_2^2+y_2^2} & \dfrac{y_2}{x_2^2+y_2^2} & \cdots & 0 & 0\\
\vdots & \vdots & \vdots & \vdots & \ddots & \vdots & \vdots\\
0 & 0 & 0 & 0 & \cdots & \dfrac{x_n}{x_n^2+y_n^2} & \dfrac{y_n}{x_n^2+y_n^2}
\end{pmatrix}.
$$

\begin{theorem}
Let $q\in\Sigma\cap U_\ell$ and suppose that
$
\rank DG_\ell(q)=n+1.
$
Then $\Sigma$ is a smooth real analytic manifold of dimension $n-1$ near $q$. Let $h=L|_\Sigma$. The differential $dh_q:T_q\Sigma\to\R^n$ has rank smaller than $n-1$ if and only if
$\di
\rank
\begin{pmatrix}
DG_\ell(q)\\
D\Log(q)
\end{pmatrix}
<2n.
$
Since the displayed matrix is square of size $2n$, this is equivalent to
$\di
\det
\begin{pmatrix}
DG_\ell(q)\\
D\Log(q)
\end{pmatrix}
=0.
$
Thus, on the smooth part of $\Sigma\cap U_\ell$, the degeneracy locus of the restricted logarithmic map is cut out by the equations $G_\ell=0$ together with the single Jacobian determinant equation
$$
J_\ell=
\det
\begin{pmatrix}
DG_\ell\\
D\Log
\end{pmatrix}
=0.
$$
After multiplication by a power of $\prod_{j=1}^n(x_j^2+y_j^2)$, the equation $J_\ell=0$ becomes polynomial in the real coordinates.
\end{theorem}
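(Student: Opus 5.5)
The plan is to use the regular value theorem for the first claim and a kernel-intersection argument for the rank criterion.

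\emph{Smoothness.} Near $q$, $G_\ell:U_\ell\to\R^{n+1}$ is a real analytic map from an open subset of $\R^{2n}$. Its differential at $q$ is surjective because $\rank DG_\ell(q)=n+1$. By the implicit function theorem, $\Sigma\cap U_\ell=G_\ell^{-1}(0)$ is therefore, near $q$, a real analytic submanifold of dimension $2n-(n+1)=n-1$. It has tangent space $T_q\Sigma=\ker DG_\ell(q)$. Here we use the preceding theorem, which identifies $\Sigma\cap U_\ell$ with $G_\ell^{-1}(0)$.

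\emph{Rank criterion.} Since $h$ is the restriction of the ambient map $\Log$ to $\Sigma$, we have $dh_q=D\Log(q)|_{\ker DG_\ell(q)}$. Hence
\[
\ker dh_q=\ker DG_\ell(q)\cap\ker D\Log(q)=\ker\begin{pmatrix}DG_\ell(q)\\ D\Log(q)\end{pmatrix}.
\]
The stacked matrix is $(2n+1)\times 2n$, not square; I return to this below. By rank–nullity on the $(n-1)$-dimensional space $T_q\Sigma$, $\rank dh_q<n-1$ if and only if $\ker dh_q\neq0$. This holds if and only if the stacked matrix has nontrivial kernel, that is, rank $<2n$. This proves the first equivalence. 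Note that $D\Log(q)$ always has rank $n$, since each row has the nonzero block $(x_j,y_j)/|z_j|^2$. Its kernel is the $n$-dimensional space of ``angular'' directions $(-y_j,x_j)$.

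\emph{Main obstacle: the size count.} The stacked matrix has $(n+1)+n=2n+1$ rows, so reducing to a single determinant needs care. The key point is that the rows are never independent on $V$. The differential $d(\Rea f),d(\Ima f)$ restricted to the angular directions equals the real and imaginary parts of $\sum_j F_j\,i\,\theta_j$. I would try to show that, on $\Sigma$, the conditions $F_j/F_\ell\in\R$ force one linear relation among the rows $D g_1$, $D g_2$ and the rows of $D\Log$. Write $F_j=\lambda r_j$ with $r\in\R^n$ real. Then $D(\Rea f)$ and $D(\Ima f)$ are the real and imaginary parts of $\lambda\sum r_j\,d\log z_j$, where $d\log z_j=d\log|z_j|+i\,d\arg z_j$. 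So the combination $\Rea(\bar\lambda\,df)=|\lambda|^2\sum r_j\,d\log|z_j|$ lies in the row span of $D\Log$ (up to the scaling by $|z_j|$). This exhibits the dependency. Consequently one of $g_1,g_2$ can be dropped, namely the row $\Rea(\bar\lambda df)$, which is a combination of the two with coefficients analytic in $q$. The result is a genuinely square $2n\times2n$ matrix with the same row span, and hence the same rank.

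I would state the determinant $J_\ell$ for this square matrix: replace $(g_1,g_2)$ by the single row $\Ima(\overline{F_\ell}\,df)$, using $\lambda\propto F_\ell$ on $U_\ell$. Finally, clearing denominators is routine. Each entry of $D\Log$ has denominator $x_j^2+y_j^2$, and the $F_j$ are Laurent polynomials. Multiplying each row by the appropriate power of $x_j^2+y_j^2$ (or by monomials) changes the determinant only by a factor nonvanishing on $(\C^\ast)^n$, so $J_\ell=0$ becomes polynomial.
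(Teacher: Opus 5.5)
Your proof is correct, and at the one step where it departs from the paper, you are right and the paper is not.

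\textbf{Where you match the paper.} The smoothness step is the same regular-value argument. Your rank criterion, via $\ker dh_q=\ker DG_\ell(q)\cap\ker D\Log(q)$ and rank--nullity, is equivalent to the paper's identity $\rank\begin{pmatrix}A\\ B\end{pmatrix}=\rank A+\rank(B|_{\ker A})$, which the paper proves by choosing a complement $E$ of $\ker A$.

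\textbf{Where you differ: the determinant step.} The paper's proof simply asserts that the augmented matrix has $2n$ rows and $2n$ columns. In fact it has $(n+1)+n=2n+1$ rows, so the $J_\ell$ of the statement is not defined as written. The paper itself later corrects this to the vanishing of all $2n\times 2n$ minors, and for $n=2$ it uses the pair $\widetilde J_z=\widetilde J_w=0$.

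Your repair is different and sharper:
\begin{itemize}
\item On $\Sigma\cap U_\ell$ the numbers $\overline{F_\ell}F_j$ are real. Hence $\Rea(\overline{F_\ell}\,df)=\sum_j\overline{F_\ell}F_j\,d\log|z_j|$ lies in the span of the rows of $D\Log$. Those rows are exactly $d\log|z_j|$, so your hedge ``up to the scaling by $|z_j|$'' is unnecessary.
\item The change of rows $(Dg_1,Dg_2)\mapsto(\Rea(\overline{F_\ell}\,df),\Ima(\overline{F_\ell}\,df))$ is invertible, with determinant $|F_\ell|^2\neq0$.
\item Therefore the square matrix with rows $\Ima(\overline{F_\ell}\,df)$, $D\rho^{(\ell)}$, $D\Log$ has the same rank as the stacked matrix.
\end{itemize}

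\textbf{What each approach buys.} Your route uses holomorphy of $f$ together with criticality to produce one polynomial equation. That is what actually justifies the statement's ``single Jacobian determinant'' claim, once $J_\ell$ is redefined. For $n=2$ it even replaces the paper's two equations $\widetilde J_z=\widetilde J_w=0$ by one. The minors approach needs no structure at all, but it gives several equations.

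\textbf{Wording points.} It is not true that $g_1$ or $g_2$ by itself can be dropped. At points where $F_\ell(q)\in i\R$, the row $Dg_2$ already lies in the span of the rows of $D\Log$. At points where $F_\ell(q)\in\R$, the same holds for $Dg_1$. Deleting the wrong one of $g_1,g_2$ therefore gives a determinant that vanishes identically at such points. Your final uniform choice, keeping $\Ima(\overline{F_\ell}\,df)$ and discarding $\Rea(\overline{F_\ell}\,df)$, is the correct one.

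Similarly, ``the rows are never independent'' is automatic for $2n+1$ covectors on $\R^{2n}$. The real content is the uniform, polynomial choice of the redundant row. If $u,v$ are taken after clearing Laurent denominators, i.e.\ from $z^mf$, then $d(z^mf)=z^m\,df$ on $V$. The kept row then becomes $\Ima(\overline{z^mF_\ell}\,d(z^mf))$, and nothing else changes.
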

\begin{proof}
The hypothesis $\rank DG_\ell(q)=n+1$ and the regular-value theorem imply that $G_\ell^{-1}(0)$ is a smooth submanifold near $q$ of codimension $n+1$, hence of dimension $n-1$ \cite[Chapter~1, Theorem~3.2]{Hirsch76}. Its tangent space is $T_q\Sigma=\ker DG_\ell(q)$.

Let $A=DG_\ell(q):\R^{2n}\to\R^{n+1}$ and $B=D\Log(q):\R^{2n}\to\R^n$. Since $A$ is surjective, the rank of the restriction $B|_{\ker A}$ satisfies
$$
\rank
\begin{pmatrix}
A\\
B
\end{pmatrix}
=
\rank A+\rank(B|_{\ker A}).
$$
To verify this identity, choose a direct-sum decomposition $\R^{2n}=\ker A\oplus E$ such that $A|_E:E\to\R^{n+1}$ is an isomorphism. The contribution of $E$ to the image of the map $v\mapsto(Av,Bv)$ has dimension $n+1$, while the additional contribution from $\ker A$ is precisely $\{0\}\oplus B(\ker A)$ and has dimension $\rank(B|_{\ker A})$.
Since $\rank A=n+1$, one obtains
$$
\rank(dh_q)
=
\rank(B|_{\ker A})
=
\rank
\begin{pmatrix}
A\\
B
\end{pmatrix}
-(n+1).
$$
Thus $\rank(dh_q)<n-1$ if and only if the augmented matrix has rank less than $(n+1)+(n-1)=2n$. Since it has exactly $2n$ rows and $2n$ columns, this is equivalent to the vanishing of its determinant.

The entries of $DG_\ell$ are polynomial after clearing Laurent denominators, while the entries of $D\Log$ have denominators $x_j^2+y_j^2$. These denominators never vanish on $(\C^\ast)^n$. Multiplying $J_\ell$ by a sufficiently large power of $\prod_j(x_j^2+y_j^2)$ therefore produces a polynomial equation with the same zero set in the algebraic torus.
\end{proof}

\begin{remark}
The condition $\rank DG_\ell(q)=n+1$ is the regularity condition for the critical locus itself. If it fails, then $q$ is a singular point of the real critical set in this chart. Such points must be included among the degenerate critical lifts even before one studies the rank of $h=L|_\Sigma$.
\end{remark}

\subsection{Every singular contour point comes from degeneration or multiple lifts}
We call $p\in\CA_V$ a regular contour point if there is a neighborhood $W$ of $p$ in $\R^n$ such that $\CA_V\cap W$ is a smooth embedded real hypersurface. Otherwise $p$ is called a singular contour point.

For a critical lift $q\in\Sigma$ with $L(q)=p$, say that $q$ is nondegenerate for the contour map if $\Sigma$ is smooth near $q$ and $\rank d(L|_\Sigma)_q=n-1$. In that case $L|_\Sigma$ is an immersion at $q$.

\begin{theorem}
Let $p\in\CA_V$. Assume that there is a neighborhood $W$ of $p$ and an open neighborhood $\Omega$ of the compact set $\Sigma\cap L^{-1}(p)$ such that the restriction
$$
L:\Sigma\cap\Omega\longrightarrow W
$$
is proper. Suppose also that $\Sigma\cap L^{-1}(p)$ is finite. If $p$ is a singular point of the contour, then either there exists a lift $q\in\Sigma\cap L^{-1}(p)$ at which $\Sigma$ is singular or $\rank d(L|_\Sigma)_q<n-1$, or there exist two distinct critical lifts $q_1,q_2\in\Sigma\cap L^{-1}(p)$.
Equivalently, if $p$ has exactly one critical lift $q$, if $\Sigma$ is smooth near $q$, and if $\rank d(L|_\Sigma)_q=n-1$, then $p$ is a regular contour point.
\end{theorem}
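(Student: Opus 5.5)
We prove the equivalent contrapositive form stated at the end of the theorem. Assume that $\Sigma\cap L^{-1}(p)=\{q\}$ consists of a single point, that $\Sigma$ is smooth near $q$, and that $\rank d(L|_\Sigma)_q=n-1$. We show that $p$ is a regular contour point. The first implication then follows immediately. Indeed, if $p$ is singular, then either some lift is degenerate or the fiber contains at least two points; the fiber is nonempty because $p\in\CA_V$.

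The first step is local. Since $\Sigma$ is a smooth $(n-1)$-dimensional manifold near $q$ and $h=L|_\Sigma$ has injective differential at $q$, the immersion theorem \cite{Hirsch76} gives an open neighborhood $N\subset\Omega$ of $q$ in $\Sigma$ with two properties: $h|_N$ is an embedding, and $h(N)$ is a smooth embedded hypersurface in some small ball $B$ around $p$. Shrinking $N$ and $B$, we may take $h(N)=M\cap B$, where $M$ is a closed smooth hypersurface of $B$ given as a graph over a hyperplane.

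The second step, which is the main point, uses properness to show that no other part of $\Sigma\cap\Omega$ enters a smaller neighborhood of $p$. Consider the set $K_k=\Sigma\cap\Omega\cap L^{-1}(\overline{B_{1/k}(p)})\setminus N$ for large $k$ with $\overline{B_{1/k}(p)}\subset W$. By properness, $\Sigma\cap\Omega\cap L^{-1}(\overline{B_{1/k}(p)})$ is compact. Since $N$ is open in $\Sigma$, each $K_k$ is a compact set, and the sets $K_k$ are nested and decreasing. Their intersection is contained in $(\Sigma\cap L^{-1}(p))\setminus N=\varnothing$. By compactness, some $K_k$ is empty. Hence $L(\Sigma\cap\Omega)\cap B_{1/k}(p)\subset h(N)$.

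One must also control critical points outside $\Omega$. I expect this to be the delicate point: the hypothesis only asserts properness on $\Omega$. I would interpret the assumption, as intended, as saying that $\CA_V\cap W=L(\Sigma\cap\Omega)\cap W$, meaning that no critical points outside $\Omega$ accumulate over $p$. Equivalently, one can argue that if points $q_k\in\Sigma\setminus\Omega$ had $L(q_k)\to p$, properness over a neighborhood of the whole fiber would be violated. If needed, I would state this reading explicitly, since it is exactly what "neighborhood of the compact fiber" with properness is meant to encode.

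Granting this, $\CA_V\cap B_{1/k}(p)=h(N)\cap B_{1/k}(p)=M\cap B_{1/k}(p)$. This set is a smooth embedded hypersurface, so $p$ is a regular contour point. Finiteness of the fiber is used only to make the dichotomy meaningful; the argument above needs only that the fiber is a single point.
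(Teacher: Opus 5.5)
Your argument is essentially the paper's: a local embedding of $h=L|_\Sigma$ near $q$ from the immersion/constant-rank theorem, followed by a properness/compactness argument showing that no other part of $\Sigma\cap\Omega$ maps near $p$. Your nested compact sets $K_k$ replace the paper's convergent subsequence. The paper's proof also passes over critical points outside $\Omega$ without comment, and here your proposal needs a correction. Your claim that properness on $\Omega$ alone would be violated by such points is not valid. However, no reinterpretation of the hypothesis is needed. $\Log:(\C^\ast)^n\to\R^n$ is proper and $\Sigma$ is closed in $(\C^\ast)^n$, so $L|_\Sigma$ is globally proper. Hence any $q_k\in\Sigma$ with $L(q_k)\to p$ has a subsequence converging to the unique lift $q$, and so eventually lies in $N$.
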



\begin{proof}
Assume that $p$ has exactly one critical lift $q$, that $\Sigma$ is smooth near $q$, and that $\rank d(L|_\Sigma)_q=n-1$. Since $\dim_\R\Sigma=n-1$, the differential of $h=L|_\Sigma$ is injective at $q$. By the constant-rank theorem, there is a neighborhood $U$ of $q$ in $\Sigma$ such that $h|_U$ is an immersion and, after shrinking $U$, an embedding onto a smooth embedded $(n-1)$-dimensional submanifold of $\R^n$ \cite[Chapter~2, Theorem~5.1]{Hirsch76}. Denote this image by $M=h(U)$.

It remains to exclude the possibility that points of $\Sigma\setminus U$ have logarithmic images arbitrarily close to $p$ and thereby create additional local branches. Suppose that no neighborhood of $p$ has this exclusion property. Then there are points $q_\nu\in(\Sigma\cap\Omega)\setminus U$ such that $L(q_\nu)\to p$. Properness of $L:\Sigma\cap\Omega\to W$ implies that, after passing to a subsequence, $q_\nu$ converges to a point $q_\infty\in\Sigma\cap\Omega$ with $L(q_\infty)=p$. The fiber over $p$ consists only of $q$, so $q_\infty=q$. For $\nu$ sufficiently large, this gives $q_\nu\in U$, contradicting the choice of $q_\nu$.

Hence, after shrinking $W$ around $p$, one has $\CA_V\cap W=L(U)\cap W=M\cap W$. Thus the contour is a smooth embedded hypersurface near $p$, so $p$ is regular. The contrapositive proves the theorem.
\end{proof}

\begin{corollary}
On each chart $U_\ell$, define the real algebraic degeneracy set
$$
\mathcal D_\ell=
\left\{
q\in U_\ell
\ \middle|\ 
G_\ell(q)=0,\ 
\rank DG_\ell(q)<n+1
\text{ or }
J_\ell(q)=0
\right\}.
$$
Define also the double-lift incidence set
$$
\mathcal M=
\left\{
(q,q')\in\Sigma\times\Sigma
\ \middle|\ 
q\neq q',\ \Log(q)=\Log(q')
\right\}.
$$
Under the local properness and finiteness hypotheses of the theorem, every singular contour point belongs to
$$
\bigcup_{\ell=1}^n\Log(\mathcal D_\ell)
\ \cup\ 
\Log\!\left(\operatorname{pr}_1(\mathcal M)\right).
$$
Thus the singular locus of the contour is contained in the union of the logarithmic images of the Jacobian-degeneracy locus and the multiple-lift locus.
\end{corollary}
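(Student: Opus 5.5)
The corollary is essentially a restatement of the preceding theorem in terms of the explicit sets $\mathcal D_\ell$ and $\mathcal M$. I would prove it by taking a singular contour point $p$ and applying the theorem to it. Under the properness and finiteness hypotheses, the theorem gives two possibilities.

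\emph{Either} there is a lift $q\in\Sigma\cap L^{-1}(p)$ at which $\Sigma$ is singular or $\rank d(L|_\Sigma)_q<n-1$. \emph{Or} there are two distinct critical lifts $q_1\neq q_2$ with $L(q_1)=L(q_2)=p$.

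The second case is immediate. The pair $(q_1,q_2)$ lies in $\mathcal M$, because $q_1\ne q_2$, both points are in $\Sigma$, and $\Log(q_1)=\Log(q_2)=p$. Hence $p=\Log(q_1)\in\Log(\operatorname{pr}_1(\mathcal M))$.

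For the first case I use the charts. Since $V$ is smooth, the functions $F_1,\dots,F_n$ do not vanish simultaneously, so the charts $U_\ell$ cover $V$. I choose $\ell$ with $q\in U_\ell$. By the first theorem, $\Sigma\cap U_\ell=G_\ell^{-1}(0)$, so $G_\ell(q)=0$. Now there are two subcases.

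\begin{itemize}
\item If $\rank DG_\ell(q)<n+1$, then $q\in\mathcal D_\ell$ directly.
\item If $\rank DG_\ell(q)=n+1$, the Jacobian theorem shows that $\Sigma$ is a smooth $(n-1)$-manifold near $q$. So the degeneracy must be the rank drop $\rank d(L|_\Sigma)_q<n-1$. The same theorem says this is equivalent to $J_\ell(q)=0$, so again $q\in\mathcal D_\ell$.
\end{itemize}

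In both subcases $p=\Log(q)\in\Log(\mathcal D_\ell)$, which places $p$ in the stated union.

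The one point needing care is the first subcase. There the theorem's alternative "$\Sigma$ singular at $q$" is phrased intrinsically, while $\mathcal D_\ell$ uses the chart-dependent condition $\rank DG_\ell<n+1$. This direction causes no difficulty. If $\rank DG_\ell(q)=n+1$, then $\Sigma$ is smooth at $q$, so intrinsic singularity of $\Sigma$ at $q$ forces $\rank DG_\ell(q)<n+1$ in every chart containing $q$. The inclusion therefore holds regardless of which chart is chosen. This is the main thing to check; everything else is direct bookkeeping from the two earlier theorems.
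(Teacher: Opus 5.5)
Your proof is correct and is essentially the paper's own argument. You apply the dichotomy theorem, put a degenerate lift into $\mathcal D_\ell$ for a chart $U_\ell$ containing it (via the regular-value and augmented-Jacobian criteria), and put a pair of distinct lifts into $\mathcal M$. Splitting on $\rank DG_\ell(q)$ rather than on smoothness of $\Sigma$ is slightly cleaner than the paper's ``smooth but rank-deficient'' case, which tacitly assumes $\rank DG_\ell(q)=n+1$ before invoking the Jacobian criterion.
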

\begin{proof}
Let $p$ be singular. The preceding theorem shows that either one critical lift is degenerate or there are two distinct critical lifts. In the first case, choose a chart $U_\ell$ containing the lift. If the critical locus is singular, then $\rank DG_\ell<n+1$. If the critical locus is smooth but the restricted logarithmic differential has rank smaller than $n-1$, then $J_\ell=0$ by the Jacobian criterion. Hence the lift lies in $\mathcal D_\ell$. In the second case, the pair of distinct lifts belongs to $\mathcal M$. Taking logarithmic images gives the asserted inclusion.
\end{proof}

The preceding results provide a finite chartwise procedure. On each chart $F_\ell\neq0$, one writes the real equations $G_\ell=0$ for the critical locus. One then computes all $(n+1)\times(n+1)$ minors of $DG_\ell$ to detect singular points of the critical locus and computes the determinant $J_\ell$ to detect rank loss of the restricted logarithmic map. These equations define the source-degeneracy locus.

The multiple-lift locus is described by two copies of the critical equations together with equality of logarithmic moduli. If $q=(z_1,\ldots,z_n)$ and $q'=(z_1',\ldots,z_n')$, then $\Log(q)=\Log(q')$ is equivalent to
$$
|z_j|^2=|z_j'|^2
\qquad
\text{for }1\leq j\leq n.
$$
In real coordinates this becomes $x_j^2+y_j^2=(x_j')^2+(y_j')^2$. The diagonal $q=q'$ must be removed. Algebraically, this removal is implemented by saturation with respect to the ideal generated by the coordinate differences $x_j-x_j'$ and $y_j-y_j'$. The projection of this incidence variety to the first copy of the source, followed by $\Log$, gives the multiple-lift candidate set.

The equations identify a rigorous superset of the singular contour locus. Equality need not hold without additional hypotheses. A degenerate lift can still have a smooth image, and two distinct lifts can parametrize the same smooth local image branch. Determining the exact analytic type requires comparison of higher jets of the restricted logarithmic map along the relevant branches. The theorem above asserts only the necessary dichotomy that every singular image point must come from source degeneration or from more than one critical lift.

\subsection*{Conclusion}
For a smooth hypersurface, the computation of possible singular points of the amoeba contour begins with the logarithmic first derivatives $F_j=D_jf$. The critical locus is obtained by imposing projective reality of $[F_1:\cdots:F_n]$. On a chart $F_\ell\neq0$, this gives $n-1$ explicit real equations. The singular points of the critical locus are detected by the rank of $DG_\ell$, while the failure of the restricted logarithmic map to be immersive is detected by the square augmented Jacobian determinant $J_\ell$. Singular contour points not detected in this way must arise from distinct critical lifts with equal logarithmic modulus. This produces an  algebraic-elimination  plan for finding all candidate singularities of the contour.


\section{Degenerate Critical Lifts, and Singularities of Amoeba Contours}

Let $f\in\C[z_1^{\pm1},\ldots,z_n^{\pm1}]$ be a Laurent polynomial and suppose that $V=\{z\in(\C^\ast)^n\mid f(z)=0\}$ is smooth. Let $L=\Log|_V:V\longrightarrow\R^n$ and let $\Sigma=\Crit(L)$. The contour of the amoeba is $\CA_V=L(\Sigma)$. For $1\leq j\leq n$, put $D_j=z_j\partial_{z_j}$ and $F_j=D_jf$. The logarithmic Gauss map is
$$
\gamma_{\log}:V\longrightarrow\PP^{n-1},\qquad
\gamma_{\log}(z)=[F_1(z):\cdots:F_n(z)].
$$
The critical-locus identity $\Sigma=\gamma_{\log}^{-1}(\R\PP^{n-1})$ is the logarithmic Gauss characterization of the critical points of the logarithmic map; it appears for hypersurfaces in Mikhalkin's work and is generalized to arbitrary codimension by Madani and Nisse \cite[Lemma~3]{Mikhalkin00}\cite[Theorem~3.2]{MadaniNisse13}.

Write $z_j=x_j+iy_j$ and let $X=(x_1,y_1,\ldots,x_n,y_n)\in\R^{2n}$. Set $u=\Rea f$ and $v=\Ima f$. On the chart $U_\ell=\{F_\ell\neq0\}$, choose an ordering $j_1,\ldots,j_{n-1}$ of $\{1,\ldots,n\}\setminus\{\ell\}$ and define
$$
\rho_r^{(\ell)}=\Ima\!\left(F_{j_r}\overline{F_\ell}\right),\qquad 1\leq r\leq n-1.
$$
Then
$$
G_\ell=\left(u,v,\rho_1^{(\ell)},\ldots,\rho_{n-1}^{(\ell)}\right):U_\ell\longrightarrow\R^{n+1}
$$
satisfies $\Sigma\cap U_\ell=G_\ell^{-1}(0)$. The equations above are real algebraic after clearing Laurent denominators.

The differential of the ambient logarithmic map is the $n\times2n$ matrix
$$
D\Log(X)=
\begin{pmatrix}
\dfrac{x_1}{x_1^2+y_1^2} & \dfrac{y_1}{x_1^2+y_1^2} & 0 & 0 & \cdots & 0 & 0\\
0 & 0 & \dfrac{x_2}{x_2^2+y_2^2} & \dfrac{y_2}{x_2^2+y_2^2} & \cdots & 0 & 0\\
\vdots & \vdots & \vdots & \vdots & \ddots & \vdots & \vdots\\
0 & 0 & 0 & 0 & \cdots & \dfrac{x_n}{x_n^2+y_n^2} & \dfrac{y_n}{x_n^2+y_n^2}
\end{pmatrix}.
$$

\subsection{Degeneration of the restricted logarithmic map}

Let $h=L|_{\Sigma}$. At a point at which $\Sigma$ is smooth of dimension $n-1$, the contour map is locally regular precisely when $dh$ has rank $n-1$.

\begin{proposition}
Let $q\in\Sigma\cap U_\ell$ and assume $\rank DG_\ell(q)=n+1$. Then $\Sigma$ is a smooth real analytic manifold of dimension $n-1$ near $q$, and $T_q\Sigma=\ker DG_\ell(q)$. Moreover,
$$
\rank dh_q=
\rank
\begin{pmatrix}
DG_\ell(q)\\
D\Log(q)
\end{pmatrix}
-(n+1).
$$
Consequently, $\rank dh_q<n-1$ if and only if
$$
\rank
\begin{pmatrix}
DG_\ell(q)\\
D\Log(q)
\end{pmatrix}<2n.
$$
Since the augmented matrix has $2n+1$ rows and $2n$ columns, the latter condition is equivalent to the vanishing of all its $2n\times2n$ minors.
\end{proposition}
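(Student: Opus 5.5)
The plan is to follow the same linear-algebra reduction as in the Jacobian criterion of the previous section, correcting the row count along the way. First, since $\rank DG_\ell(q)=n+1$ and $G_\ell$ is real analytic (after clearing Laurent denominators, it is polynomial on $U_\ell$), the regular-value theorem \cite[Chapter~1, Theorem~3.2]{Hirsch76}, in its real-analytic form via the analytic implicit function theorem, applies. It shows that $G_\ell$ has maximal rank on a neighborhood of $q$ and that $\Sigma\cap U_\ell=G_\ell^{-1}(0)$ is there a real analytic submanifold of $\R^{2n}$ of codimension $n+1$, hence of dimension $n-1$. Its tangent space is $\ker DG_\ell(q)$. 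This uses only the description $\Sigma\cap U_\ell=G_\ell^{-1}(0)$ from the theorem on polynomial equations for the critical locus.

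Next, I would compute $\rank dh_q$. Since $h=L|_\Sigma$ is the restriction of the ambient map $\Log$, we have $dh_q=D\Log(q)|_{T_q\Sigma}=B|_{\ker A}$, where $A=DG_\ell(q)$ and $B=D\Log(q)$. The key identity is
$$\rank\begin{pmatrix}A\\ B\end{pmatrix}=\rank A+\rank(B|_{\ker A})\qquad\text{for surjective }A.$$
To prove it, choose a complement $E$ with $\R^{2n}=\ker A\oplus E$ and $A|_E$ an isomorphism onto $\R^{n+1}$. The image of $v\mapsto(Av,Bv)$ equals the graph-type subspace $\{(Ae,Be)\mid e\in E\}$, which has dimension $n+1$, plus $\{0\}\oplus B(\ker A)$. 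These two pieces meet only in $0$, because a vector $(Ae,Be)$ with first component zero forces $e=0$. Substituting $\rank A=n+1$ gives the displayed formula for $\rank dh_q$.

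The remaining assertions are bookkeeping. We have $\rank dh_q<n-1$ if and only if the augmented rank is less than $(n+1)+(n-1)=2n$. The augmented matrix has $(n+1)+n=2n+1$ rows and $2n$ columns, so its rank is at most $2n$. Rank less than $2n$ is therefore equivalent, by the standard minor characterization of rank, to the vanishing of all $2n\times 2n$ minors, that is, of the $2n+1$ determinants obtained by deleting one row. Note that this count corrects the earlier claim that the matrix is square: with $n+1$ rows of $DG_\ell$ and $n$ rows of $D\Log$, a single determinant does not suffice.

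I expect no serious obstacle. The only points needing care are verifying that the two summands in the image decomposition intersect trivially, and making sure the analytic (rather than merely smooth) version of the implicit function theorem is invoked, so that real-analyticity of $\Sigma$ near $q$ is justified.
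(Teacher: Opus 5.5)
Your proposal is correct and follows essentially the same route as the paper: the regular-value theorem gives smoothness of $\Sigma$ and $T_q\Sigma=\ker DG_\ell(q)$, the decomposition $\R^{2n}=\ker A\oplus E$ gives $\rank\begin{pmatrix}A\\ B\end{pmatrix}=n+1+\rank(B|_{\ker A})$, and the rank condition on the $(2n+1)\times 2n$ matrix is expressed by its maximal minors. You also check explicitly that the two image pieces meet only in $0$, a step the paper leaves implicit, and your remark correcting the earlier ``square matrix'' claim agrees with the row count the paper itself uses here.
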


\begin{proof}
The regular-value theorem gives the smoothness and dimension of $G_\ell^{-1}(0)$ because $DG_\ell(q)$ has full row rank $n+1$ \cite[Chapter~1, Section~3]{Hirsch76}. Put $A=DG_\ell(q)$ and $B=D\Log(q)$. Choose a direct-sum decomposition $\R^{2n}=\ker A\oplus E$ such that $A|_E:E\to\R^{n+1}$ is an isomorphism. For $\Phi=(A,B):\R^{2n}\to\R^{n+1}\oplus\R^n$, the image of $E$ contributes $n+1$ independent dimensions through the first component, while the additional image contributed by $\ker A$ is $\{0\}\oplus B(\ker A)$. Therefore
$$
\rank\Phi=n+1+\rank(B|_{\ker A}).
$$
Since $B|_{\ker A}=dh_q$, the rank identity follows. The inequality $\rank dh_q<n-1$ is then equivalent to $\rank\Phi<2n$. A matrix with $2n$ columns has rank smaller than $2n$ precisely when all maximal $2n\times2n$ minors vanish.
\end{proof}

For each chart $U_\ell$, let $\mathcal D_\ell$ be the union of the singular locus of the critical equations and the degeneracy locus of the restricted logarithmic map. Set-theoretically,
$$
\mathcal D_\ell=
\left\{G_\ell=0,\ \rank DG_\ell<n+1\right\}
\cup
\left\{G_\ell=0,\ \rank DG_\ell=n+1,\ \rank\begin{pmatrix}DG_\ell\\ D\Log\end{pmatrix}<2n\right\}.
$$
For elimination purposes it is preferable to treat the two pieces separately, because the first is defined by the $(n+1)\times(n+1)$ minors of $DG_\ell$, whereas the second is defined by the $2n\times2n$ minors of the augmented matrix after restricting to the open set where at least one $(n+1)\times(n+1)$ minor of $DG_\ell$ is nonzero.

\subsection{The elimination system}

Introduce positive radius variables $R=(R_1,\ldots,R_n)$ with $R_j=x_j^2+y_j^2$. The logarithmic target point corresponding to $R$ is $p(R)=\frac12(\log R_1,\ldots,\log R_n)$. Working with $R$ makes the projection algebraic; the final logarithm is applied only after elimination.

Let $I_{\Sigma,\ell}$ be the real polynomial ideal generated by the cleared-denominator forms of $u,v,\rho_1^{(\ell)},\ldots,\rho_{n-1}^{(\ell)}$. Let $I_{\mathrm{sing},\ell}$ be the ideal generated by $I_{\Sigma,\ell}$ and all $(n+1)\times(n+1)$ minors of $DG_\ell$. Let $I_{\mathrm{imm},\ell}$ be the ideal generated by $I_{\Sigma,\ell}$ and all $2n\times2n$ minors of
$\di
\begin{pmatrix}
DG_\ell\\
D\Log
\end{pmatrix},
$
after clearing the nonvanishing denominators $\prod_j(x_j^2+y_j^2)$. To remain in the chart $F_\ell\neq0$, these ideals must be saturated by $|F_\ell|^2=F_\ell\overline{F_\ell}$. Adjoin the radius equations
$$
R_j-x_j^2-y_j^2=0,
\qquad 1\leq j\leq n.
$$
Eliminating the source variables $X$ from
$
I_{\mathrm{sing},\ell}+\langle R_j-x_j^2-y_j^2\rangle
$
and from
$
I_{\mathrm{imm},\ell}+\langle R_j-x_j^2-y_j^2\rangle
$
gives ideals in $\R[R_1,\ldots,R_n]$. Their positive real zero sets contain the radius images of all degenerate critical lifts.

For multiple lifts, introduce a second source point $X'=(x_1',y_1',\ldots,x_n',y_n')$ and chart indices $\ell,m$. Let $I_{\mathrm{mult},\ell m}$ be generated by $I_{\Sigma,\ell}(X)$, $I_{\Sigma,m}(X')$, and
$
x_j^2+y_j^2-(x_j')^2-(y_j')^2,
\,  1\leq j\leq n.
$
The diagonal must be removed. Set
$$
\delta(X,X')=\sum_{j=1}^n\left((x_j-x_j')^2+(y_j-y_j')^2\right).
$$
Over the real numbers, $\delta=0$ is equivalent to $X=X'$. The off-diagonal incidence ideal is therefore the saturation
$
I_{\mathrm{mult},\ell m}^{\mathrm{off}}=I_{\mathrm{mult},\ell m}:\langle\delta\rangle^\infty.
$
After adjoining $R_j-x_j^2-y_j^2=0$ and eliminating both $X$ and $X'$, one obtains a radius-space ideal whose positive real zero set is the closure of the radii admitting two distinct critical lifts.


\section{The Elimination Description for Singular Amoeba Contours}

Let $V\subset(\C^\ast)^n$ be a smooth algebraic hypersurface and let $L=\Log|_V$. Write $\Sigma=\Crit(L)$ and $\CA_V=L(\Sigma)$. For a critical point $q=(z_1,\ldots,z_n)\in\Sigma$, write $z_j=x_j+iy_j$ and define its radius vector by
$
\mathcal R(q)=(|z_1|^2,\ldots,|z_n|^2)=(x_1^2+y_1^2,\ldots,x_n^2+y_n^2)\in\R_{>0}^n.
$
Then
$
L(q)=\frac12(\log R_1,\ldots,\log R_n)
$
whenever $\mathcal R(q)=(R_1,\ldots,R_n)$.

For each logarithmic Gauss chart $U_\ell$, let $I_{\mathrm{sing},\ell}$ denote the real polynomial ideal defining the points of the critical locus at which the chartwise critical equations fail to have maximal rank, and let $I_{\mathrm{imm},\ell}$ denote the real polynomial ideal defining the smooth critical points at which the differential of $L|_\Sigma$ has rank strictly smaller than $n-1$. The ideals are understood to be saturated by the chart equation ensuring $F_\ell\neq0$. Let $I_{\mathrm{mult},\ell m}^{\mathrm{off}}$ denote the saturated two-point incidence ideal defining distinct critical points in the charts $U_\ell$ and $U_m$ having equal radius vectors.

The phrase ``positive real radius set obtained from an ideal'' will be used in two different senses. Algebraic elimination produces the real points of a Zariski-closed projection and may therefore add points that are not actual projections of real solutions. Exact semialgebraic projection means the literal set of positive radius vectors for which the defining real equations, inequalities, and chart conditions admit a real source solution. This distinction is essential for the equality statement.

\begin{theorem}
Let $E_{\mathrm{deg}}\subset\R_{>0}^n$ be the union over $\ell$ of the positive real radius sets obtained from $I_{\mathrm{sing},\ell}$ and $I_{\mathrm{imm},\ell}$. Let $E_{\mathrm{mult}}\subset\R_{>0}^n$ be the union over $\ell,m$ of the positive real projections of the off-diagonal incidence systems $I_{\mathrm{mult},\ell m}^{\mathrm{off}}$. Under the local properness and finite-fiber hypotheses of the preceding theorem,
$$
\Sing(\CA_V)
\subset
\left\{
\frac12(\log R_1,\ldots,\log R_n)
\ \middle|\
R\in E_{\mathrm{deg}}\cup E_{\mathrm{mult}}
\right\}.
$$
If every nondegenerate multiple fiber produces at least two distinct local image hypersurface germs, and every point of $E_{\mathrm{deg}}$ retained after projection has a singular image germ, then equality holds after replacing the algebraic elimination closures by the corresponding exact semialgebraic projections.
\end{theorem}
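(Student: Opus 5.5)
The plan has two parts: the inclusion, which reduces to the dichotomy theorem of Section~1 and its corollary, and the equality, which uses the exact semialgebraic projections together with the two stated local hypotheses.

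\emph{Inclusion.} Let $p\in\Sing(\CA_V)$. By the dichotomy theorem of Section~1 (every singular contour point comes from degeneration or multiple lifts), applied under the local properness and finite-fiber hypotheses, one of two things happens. Either there is a lift $q\in\Sigma\cap L^{-1}(p)$ at which $\Sigma$ is singular or $\rank d(L|_\Sigma)_q<n-1$, or there are two distinct lifts $q_1\neq q_2$ in that fiber.

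In the first case, pick a chart $U_\ell$ containing $q$. This is possible because the $F_j$ have no common zero on $V$. If $\rank DG_\ell(q)<n+1$, then all $(n+1)\times(n+1)$ minors vanish at $q$, so $q$ is a real zero of $I_{\mathrm{sing},\ell}$. Otherwise, the Proposition of Section~2 shows that all $2n\times 2n$ minors of the augmented matrix vanish at $q$, so $q$ is a real zero of $I_{\mathrm{imm},\ell}$. Since $F_\ell(q)\neq0$, saturation by $|F_\ell|^2$ does not remove $q$. Setting $R=\mathcal R(q)$ then gives a real source solution, so $R$ lies in the exact projection. It therefore also lies in the elimination closure, which contains the exact projection.

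In the second case, choose charts $U_\ell\ni q_1$ and $U_m\ni q_2$. The pair $(X,X')$ satisfies the generators of $I_{\mathrm{mult},\ell m}$, and $\delta(X,X')>0$. A real point with $\delta\neq0$ lies in the zero set of the saturation $I:\langle\delta\rangle^\infty$, because $V(I:\delta^\infty)\supseteq V(I)\setminus V(\delta)$. Hence $\mathcal R(q_1)\in E_{\mathrm{mult}}$.

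Finally, since $L(q)=\tfrac12\log\mathcal R(q)$ coordinatewise, $p$ lies in the displayed set.

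\emph{Equality.} Now replace every closure by the exact semialgebraic projection. Let $R$ be a point of the exact projection and $p=\tfrac12\log R$.

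If $R$ comes from a degenerate system, then there is a real source point $q$ with $G_\ell(q)=0$ and $F_\ell(q)\neq0$. Thus $q\in\Sigma$, so $p=L(q)\in\CA_V$. By the hypothesis that retained degenerate points have singular image germs, $p$ is singular.

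If $R$ comes from $I^{\mathrm{off}}_{\mathrm{mult},\ell m}$, there are critical points $q\neq q'$ with $L(q)=L(q')=p$. Here one must use the exact projection with the inequality $\delta\neq0$ kept, not merely the saturated ideal, to guarantee that the lifts are genuinely distinct.
\begin{itemize}
\item If one of the lifts is degenerate, $p$ already lies in the degenerate case.
\item Otherwise the fiber is nondegenerate and multiple. By hypothesis it yields at least two distinct smooth hypersurface germs through $p$. A union of two distinct embedded hypersurface germs through a common point is not an embedded hypersurface germ, so $p$ is singular.
\end{itemize}
The short argument for that last claim is this. If the union were a single smooth germ $M$, each branch would be an open subset of $M$ containing $p$. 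Hence the branches would coincide as germs, a contradiction.

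\emph{Main obstacle.} I expect the main difficulty to be bookkeeping rather than conceptual. One must state carefully that the equality holds only for exact projections. Elimination closures can add real points with no real source, points with complex source, or limit points on the diagonal or on $F_\ell=0$ reintroduced by closure. The proof should remark explicitly that the inclusion survives passage to closures because exact projections are contained in their closures, while equality does not.
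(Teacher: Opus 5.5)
Your proposal is correct and follows essentially the same route as the paper. The inclusion goes through the source dichotomy theorem, with a chartwise membership of the lift in $I_{\mathrm{sing},\ell}$, in $I_{\mathrm{imm},\ell}$, or in the off-diagonal incidence system, and the fact that each exact projection lies inside its elimination set. The reverse inclusion for exact projections uses the two stated hypotheses together with the openness argument that two distinct smooth germs cannot both lie in one smooth hypersurface germ.

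Your splitting of the degenerate case by $\rank DG_\ell(q)$, rather than by smoothness of $\Sigma$, is slightly more careful than the paper's wording. It also covers the case where $\Sigma$ is smooth at $q$ but cut out there by rank-deficient equations.
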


\begin{proof}
Define the logarithmic radius map
$
\lambda:\R_{>0}^n\to\R^n
$
by
$$
\lambda(R_1,\ldots,R_n)
=
\frac12(\log R_1,\ldots,\log R_n).
$$
By construction,
$
L=\lambda\circ\mathcal R
$
on $(\C^\ast)^n$.
Let $p\in\Sing(\CA_V)$. The local properness and finite-fiber hypotheses allow the preceding source theorem to be applied at $p$. Hence at least one of the following two geometric alternatives occurs. There is a critical lift $q\in\Sigma\cap L^{-1}(p)$ at which either $\Sigma$ is singular or the restricted differential
$
d(L|_\Sigma)_q
$
has rank strictly smaller than $n-1$. Otherwise there are two distinct critical lifts
$
q,q'\in\Sigma\cap L^{-1}(p).
$
Although these alternatives may occur simultaneously, one of them must occur.

Assume first that there is a degenerate critical lift $q$. Since the logarithmic Gauss charts cover $\Sigma$, there exists an index $\ell$ such that $q\in U_\ell$. If $\Sigma$ is singular at $q$, then the chartwise defining equations of $\Sigma$ vanish at $q$ and their Jacobian has rank smaller than its maximal value. Thus $q$ is a real solution of the system defined by $I_{\mathrm{sing},\ell}$. If instead $\Sigma$ is smooth at $q$ and
$
\rank d(L|_\Sigma)_q<n-1,
$
then the augmented-Jacobian criterion shows that all maximal minors of the augmented matrix formed from the Jacobian of the critical equations and $D\Log$ vanish at $q$. Thus $q$ is a real solution of the system defined by $I_{\mathrm{imm},\ell}$.

Let
$
R=\mathcal R(q).
$
The radius equations
$
R_j-x_j^2-y_j^2=0
$
hold for every $j$. Therefore $R$ belongs to the exact positive real radius projection of the corresponding degenerate-lift system. Any algebraic elimination set used to define $E_{\mathrm{deg}}$ contains this exact projection, because elimination contains the image of every common zero of the original system. Hence
$
R\in E_{\mathrm{deg}}.
$
Since $L(q)=p$, one has
$
p=\lambda(R).
$
Therefore
$
p\in
\left\{
\lambda(R)
\ \middle|\
R\in E_{\mathrm{deg}}
\right\}.
$

Assume now that there are two distinct critical lifts
$
q\neq q'
$
with
$
L(q)=L(q')=p.
$
Choose indices $\ell$ and $m$ such that
$
q\in U_\ell
$
and
$
q'\in U_m.
$
Both points satisfy their respective chartwise critical equations. Equality of logarithmic images gives
$
\log|z_j|=\log|z_j'|
$
for every $j$, and therefore
$
|z_j|^2=|z_j'|^2
$
for every $j$. Thus $(q,q')$ satisfies the two-copy critical equations and the equal-radius equations defining the multiple-lift incidence system.
Since $q\neq q'$, the diagonal-removing polynomial
$$
\delta(q,q')
=
\sum_{j=1}^n
\left(
(x_j-x_j')^2+(y_j-y_j')^2
\right)
$$
is strictly positive. Consequently $(q,q')$ survives saturation by $\delta$, and hence it is a real point of the off-diagonal incidence system
$
I_{\mathrm{mult},\ell m}^{\mathrm{off}}.
$
Let
$
R=\mathcal R(q)=\mathcal R(q').
$
Then $R$ belongs to the exact positive real projection of this incidence system, and therefore also to any algebraic elimination set used to define $E_{\mathrm{mult}}$. Again
$
p=\lambda(R).
$
It follows that
$
p\in
\left\{
\lambda(R)
\ \middle|\
R\in E_{\mathrm{mult}}
\right\}.
$
Combining the two alternatives proves
$$
\Sing(\CA_V)
\subset
\lambda(E_{\mathrm{deg}}\cup E_{\mathrm{mult}}).
$$

It remains to prove the equality assertion. For this purpose the algebraic elimination closures must be replaced by the exact semialgebraic projections. Denote these exact sets by
$
E_{\mathrm{deg}}^{\mathrm{ex}}
$
and
$
E_{\mathrm{mult}}^{\mathrm{ex}}.
$
Thus $R\in E_{\mathrm{deg}}^{\mathrm{ex}}$ precisely when there exists an actual real critical lift $q$ with radius vector $R$ that is singular in $\Sigma$ or at which
$
\rank d(L|_\Sigma)_q<n-1.
$
Likewise, $R\in E_{\mathrm{mult}}^{\mathrm{ex}}$ precisely when there exist two actual distinct real critical lifts $q,q'$ with common radius vector $R$.
The proved inclusion remains valid with the exact sets:
$$
\Sing(\CA_V)
\subset
\lambda
\left(
E_{\mathrm{deg}}^{\mathrm{ex}}
\cup
E_{\mathrm{mult}}^{\mathrm{ex}}
\right).
$$
We prove the reverse inclusion.

Let
$
R\in E_{\mathrm{deg}}^{\mathrm{ex}}
$
and put
$
p=\lambda(R).
$
By definition there exists a degenerate critical lift
$
q\in\Sigma
$
with
$
\mathcal R(q)=R
$
and
$
L(q)=p.
$
The additional hypothesis in the theorem states that every point of the exact degenerate radius projection retained in the computation has a singular image germ. Applied to $R$, this means that the germ of $\CA_V$ at $p$ is not a smooth embedded hypersurface germ. Hence
$
p\in\Sing(\CA_V).
$
Therefore
$
\lambda(E_{\mathrm{deg}}^{\mathrm{ex}})
\subset
\Sing(\CA_V).
$

Now let
$
R\in E_{\mathrm{mult}}^{\mathrm{ex}}
$
and put
$
p=\lambda(R).
$
There exist distinct critical lifts
$
q,q'\in\Sigma
$
such that
$
\mathcal R(q)=\mathcal R(q')=R
$
and therefore
$
L(q)=L(q')=p.
$
If at least one of these lifts is degenerate, then $R$ also belongs to
$
E_{\mathrm{deg}}^{\mathrm{ex}}.
$
The preceding paragraph then yields
$
p\in\Sing(\CA_V).
$

Suppose instead that both lifts are nondegenerate. Then $\Sigma$ is smooth near both $q$ and $q'$, and
$
d(L|_\Sigma)
$
has rank $n-1$ at each point. By the constant-rank theorem, there are neighborhoods
$
U\subset\Sigma
$
of $q$ and
$
U'\subset\Sigma
$
of $q'$
such that
$
L(U)
$
and
$
L(U')
$
are smooth local hypersurface germs through $p$. The additional multiple-fiber hypothesis states that these two local image germs are distinct. The germ of the contour at $p$ therefore contains at least two distinct smooth hypersurface germs.

A union containing two distinct smooth hypersurface germs through the same point cannot itself be a single smooth embedded hypersurface germ. Indeed, suppose for contradiction that the contour germ were represented by a smooth embedded hypersurface $M$ near $p$. Since both $L(U)$ and $L(U')$ are contained in the contour germ, both would be contained in $M$ near $p$. Each has the same dimension $n-1$ as $M$. The inclusion of one smooth $(n-1)$-manifold germ into another smooth $(n-1)$-manifold germ is locally open by the inverse function theorem applied to the inclusion in local coordinates. Hence each of the germs $L(U)$ and $L(U')$ would coincide with the germ of $M$ at $p$. They would consequently coincide with each other, contradicting the assumed distinctness. Thus the contour germ at $p$ is singular, and
$
p\in\Sing(\CA_V).
$
It follows that
$
\lambda(E_{\mathrm{mult}}^{\mathrm{ex}})
\subset
\Sing(\CA_V).
$

Combining the two reverse inclusions gives
$
\lambda
\left(
E_{\mathrm{deg}}^{\mathrm{ex}}
\cup
E_{\mathrm{mult}}^{\mathrm{ex}}
\right)
\subset
\Sing(\CA_V).
$
Together with the previously established opposite inclusion, this yields
$$
\Sing(\CA_V)
=
\left\{
\frac12(\log R_1,\ldots,\log R_n)
\ \middle|\
R\in
E_{\mathrm{deg}}^{\mathrm{ex}}
\cup
E_{\mathrm{mult}}^{\mathrm{ex}}
\right\}.
$$
This is precisely the asserted equality after replacing algebraic elimination closures by exact semialgebraic projections.
\end{proof}


\section{Specialization to plane curves}

Assume now that $n=2$ and write the coordinates as $(z,w)\in(\C^\ast)^2$. Let
$
A=zf_z,
\,
B=wf_w.
$
The logarithmic Gauss map is $\gamma_{\log}=[A:B]$. On the chart $B\neq0$, the critical equation is
$
\rho=\Ima(A\overline B)=0.
$
Writing $z=x+iy$ and $w=s+it$, set
$
u=\Rea f,
\,
v=\Ima f.
$
Then the critical locus is defined by
$
u=0,
\,
v=0,
\,
\rho=0.
$
The real Jacobian of these three functions is
$$
DG=
\begin{pmatrix}
u_x&u_y&u_s&u_t\\
v_x&v_y&v_s&v_t\\
\rho_x&\rho_y&\rho_s&\rho_t
\end{pmatrix}.
$$
The critical curve is smooth at a point precisely when $\rank DG=3$, that is, when at least one of the four $3\times3$ minors of $DG$ is nonzero.
The logarithmic differential is
$$
D\Log=
\begin{pmatrix}
\dfrac{x}{x^2+y^2}&\dfrac{y}{x^2+y^2}&0&0\\
0&0&\dfrac{s}{s^2+t^2}&\dfrac{t}{s^2+t^2}
\end{pmatrix}.
$$
At a smooth point of the critical curve, the restricted map $h=\Log|_\Sigma$ fails to be immersive exactly when both determinants
$$
J_z=
\det
\begin{pmatrix}
u_x&u_y&u_s&u_t\\
v_x&v_y&v_s&v_t\\
\rho_x&\rho_y&\rho_s&\rho_t\\
\dfrac{x}{x^2+y^2}&\dfrac{y}{x^2+y^2}&0&0
\end{pmatrix}
$$
and
$$
J_w=
\det
\begin{pmatrix}
u_x&u_y&u_s&u_t\\
v_x&v_y&v_s&v_t\\
\rho_x&\rho_y&\rho_s&\rho_t\\
0&0&\dfrac{s}{s^2+t^2}&\dfrac{t}{s^2+t^2}
\end{pmatrix}
$$
vanish. Indeed, under $\rank DG=3$, the tangent line to $\Sigma$ is $\ker DG$. The differential of the restricted logarithmic map vanishes on this line if and only if both logarithmic coordinate differentials vanish there, which is equivalent to $J_z=J_w=0$.
A denominator-free form is obtained by defining
$$
\widetilde J_z=(x^2+y^2)J_z,
\qquad
\widetilde J_w=(s^2+t^2)J_w.
$$
These become real polynomial expressions after clearing the Laurent denominators in $f$, $A$, and $B$.


\medskip

Let
$
C=\{(z,w)\in(\C^\ast)^2\mid f(z,w)=0\}
$
be a smooth algebraic curve, where
$
f\in\C[z^{\pm1},w^{\pm1}].
$
Set
$
A=zf_z
$
and
$
B=wf_w.
$
Write
$
z=x+iy
$
and
$
w=s+it,
$
and define
$
u=\Rea f,
$
$
v=\Ima f,
$
and
$
\rho=\Ima(A\overline B).
$
On the chart
$
B\neq0,
$
the logarithmic critical locus is given by
$
u=v=\rho=0.
$
Let
$$
G=(u,v,\rho):\R^4\longrightarrow\R^3
$$
and denote its Jacobian matrix by
$$
DG=
\begin{pmatrix}
u_x&u_y&u_s&u_t\\
v_x&v_y&v_s&v_t\\
\rho_x&\rho_y&\rho_s&\rho_t
\end{pmatrix}.
$$
The logarithmic map is
$
\Log(z,w)=(\log|z|,\log|w|),
$
and its real differential is
$$
D\Log=
\begin{pmatrix}
\dfrac{x}{x^2+y^2}&\dfrac{y}{x^2+y^2}&0&0\\
0&0&\dfrac{s}{s^2+t^2}&\dfrac{t}{s^2+t^2}
\end{pmatrix}.
$$
Define
$$
J_z=
\det
\begin{pmatrix}
u_x&u_y&u_s&u_t\\
v_x&v_y&v_s&v_t\\
\rho_x&\rho_y&\rho_s&\rho_t\\
\dfrac{x}{x^2+y^2}&\dfrac{y}{x^2+y^2}&0&0
\end{pmatrix}
$$
and
$$
J_w=
\det
\begin{pmatrix}
u_x&u_y&u_s&u_t\\
v_x&v_y&v_s&v_t\\
\rho_x&\rho_y&\rho_s&\rho_t\\
0&0&\dfrac{s}{s^2+t^2}&\dfrac{t}{s^2+t^2}
\end{pmatrix}.
$$
Put
$
\widetilde J_z=(x^2+y^2)J_z
$
and
$
\widetilde J_w=(s^2+t^2)J_w.
$

\begin{proposition}
On the chart $B\neq0$, every degenerate critical lift belongs to the real algebraic set defined by
$
u=0,
\,
v=0,
\,
\rho=0
$
together with either the vanishing of all $3\times3$ minors of $DG$, or
$
\widetilde J_z=0,
\,
\widetilde J_w=0.
$
The analogous equations on the chart $A\neq0$ are obtained from the same critical equation and saturation by $|A|^2$ instead of $|B|^2$.
\end{proposition}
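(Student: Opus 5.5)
The plan is to specialize the general chartwise argument (the Jacobian criterion and the augmented-matrix proposition) to $n=2$. There are two ways a critical lift can be degenerate, and each leads to one of the two alternatives in the statement.

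Fix a degenerate critical lift $q$ in the chart $B\neq0$. Since $q\in\Sigma$, the theorem on polynomial equations for the critical locus gives $u(q)=v(q)=\rho(q)=0$. We then split into two cases.

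\emph{Case 1: $\Sigma$ is singular at $q$.} Then $\rank DG(q)<3$, because if $\rank DG(q)=3$ the regular-value theorem would make $\Sigma$ a smooth curve near $q$. A $3\times4$ matrix has rank less than $3$ exactly when all four of its $3\times3$ minors vanish, so $q$ satisfies the first alternative.

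\emph{Case 2: $\Sigma$ is smooth at $q$.} Then $\rank DG(q)=3$ and $T_q\Sigma=\ker DG(q)$ is a real line spanned by some vector $\tau$. Degeneracy means $\rank d(L|_\Sigma)_q<1$, that is, $D\Log(q)\tau=0$. This splits into $d\log|z|(\tau)=0$ and $d\log|w|(\tau)=0$. We use the elementary fact that, for a row vector $r$, the $4\times4$ matrix obtained by stacking $DG(q)$ on $r$ has zero determinant iff $r$ lies in the row space of $DG(q)$. Since $DG(q)$ has full rank, this holds iff $r$ annihilates $\ker DG(q)$, i.e.\ iff $r\tau=0$. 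This is the rank identity from the augmented-matrix proposition in the case of a single added row. Applying it with $r$ equal to the first and then the second row of $D\Log(q)$ gives $J_z(q)=0$ and $J_w(q)=0$. As $x^2+y^2$ and $s^2+t^2$ are positive on the torus, this is equivalent to $\widetilde J_z(q)=\widetilde J_w(q)=0$.

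It remains to justify the polynomiality and the chart condition. The polynomial form follows by clearing Laurent denominators, exactly as in the general theorem; multiplying by nonvanishing monomial factors does not change zero sets on $(\C^\ast)^2$. For the chart $A\neq0$ the argument is the same, with $\rho=\Ima(A\overline B)$ unchanged up to sign: on $\{A\neq0\}$ the reality of $[A:B]$ is still equivalent to $\Ima(A\overline B)=0$. The algebraic sets are then saturated by $|A|^2$ to stay in that chart. The only delicate point is the determinant criterion in Case 2, namely the equivalence between $\det$ of $DG$ stacked on a row vanishing and that row killing the tangent line. Once this is stated via the row-space argument above, the rest is bookkeeping.
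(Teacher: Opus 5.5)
Your proof is correct and follows the paper's argument essentially step for step. It gets $u=v=\rho=0$ from the reality of $A/B$, treats the singular case via the contrapositive of the regular-value theorem and the vanishing of the four $3\times3$ minors, handles the smooth case through the row-space (annihilator of $\ker DG(q)$) characterization of $J_z=0$ and $J_w=0$ applied to both rows of $D\Log(q)$, and finishes with positivity of $x^2+y^2$, $s^2+t^2$ and the same remark for the chart $A\neq0$.

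One small point, which you share with the paper: smoothness of $\Sigma$ at $q$ does not by itself force $\rank DG(q)=3$. It is cleaner to split on $\rank DG(q)<3$ versus $\rank DG(q)=3$, and this costs nothing because the first case already gives the minor-vanishing alternative.
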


\begin{proof}
Let
$
q=(z,w)\in C
$
be a critical point of the logarithmic map on the chart
$
B(q)\neq0.
$
Since
$
C
$
is smooth, the logarithmic Gauss map is
$
\gamma_{\log}=[A:B].
$
On the chart
$
B\neq0,
$
the condition
$
\gamma_{\log}(q)\in\R\mathbb P^1
$
is equivalent to
$
A(q)/B(q)\in\R.
$
Because
$
B(q)\neq0,
$
this is equivalent to
$
A(q)\overline{B(q)}-\overline{A(q)}B(q)=0,
$
or equivalently
$
\rho(q)=\Ima(A(q)\overline{B(q)})=0.
$
Since
$
q\in C,
$
one also has
$
u(q)=v(q)=0.
$
Therefore every critical lift on this chart belongs to the real zero set
$
G^{-1}(0).
$

A degenerate critical lift is a critical point at which either the critical locus itself is singular or the restriction of the logarithmic map to the smooth critical locus fails to have maximal rank. In the plane-curve case the critical locus, whenever smooth, is one-dimensional over $\R$, so maximal rank for the restricted logarithmic map means rank one.

Assume first that the critical locus is singular at $q$ as a real algebraic set defined by
$
G=(u,v,\rho).
$
The Jacobian criterion gives
$
\rank DG(q)<3.
$
Since
$
DG(q)
$
is a $3\times4$ matrix, this is equivalent to the vanishing of all its $3\times3$ minors. Thus $q$ satisfies the first alternative in the proposition.

Assume now that the critical locus is smooth at $q$. Then
$
\rank DG(q)=3.
$
By the real implicit function theorem,
$
G^{-1}(0)
$
is a smooth real curve near $q$, and its tangent line is
$$
T_q\Sigma=\ker DG(q).
$$
Let
$
h=\Log|_\Sigma.
$
The point $q$ is degenerate for the restricted logarithmic map precisely when
$
\rank dh_q=0.
$
Because
$
T_q\Sigma
$
is one-dimensional, this means that both component differentials
$
d\log|z|_q
$
and
$
d\log|w|_q
$
vanish on
$
T_q\Sigma.
$

Denote by
$
\alpha_z
$
and
$
\alpha_w
$
the two row covectors of
$
D\Log(q),
$
so that
$$
\alpha_z=
\left(
\dfrac{x}{x^2+y^2},
\dfrac{y}{x^2+y^2},
0,
0
\right)
$$
and
$$
\alpha_w=
\left(
0,
0,
\dfrac{s}{s^2+t^2},
\dfrac{t}{s^2+t^2}
\right).
$$
Since
$
T_q\Sigma=\ker DG(q),
$
the condition
$
\alpha_z|_{T_q\Sigma}=0
$
is equivalent to saying that
$
\alpha_z
$
belongs to the row space of
$
DG(q).
$
Indeed, for any real matrix
$
M,
$
the annihilator of
$
\ker M
$
is exactly the row space of
$
M.
$
Because
$
\rank DG(q)=3,
$
the row space of
$
DG(q)
$
has dimension three in
$
(\R^4)^\ast.
$
Hence
$
\alpha_z
$
belongs to that row space if and only if the four rows consisting of the three rows of
$
DG(q)
$
and the row
$
\alpha_z
$
are linearly dependent. This is equivalent to
$
J_z(q)=0.
$
The same argument shows that
$
\alpha_w|_{T_q\Sigma}=0
$
if and only if
$
J_w(q)=0.
$
Therefore
$$
\rank dh_q=0
\quad\Longleftrightarrow\quad
J_z(q)=0
\ \text{and}\
J_w(q)=0.
$$

Since
$
q\in(\C^\ast)^2,
$
one has
$
x^2+y^2>0
$
and
$
s^2+t^2>0.
$
This means that 
$
J_z(q)=0 
\,\Longleftrightarrow\,
\widetilde J_z(q)=0
$
and
$
J_w(q)=0
\,\Longleftrightarrow\, 
\widetilde J_w(q)=0.
$
The multiplication by
$
x^2+y^2
$
and
$
s^2+t^2
$
removes the denominators coming from
$
D\Log.
$
After also multiplying by suitable monomials in
$
x,y,s,t
$
to clear the Laurent denominators in
$
f,
$
$
A,
$
and
$
B,
$
all defining equations become real polynomial equations. These multiplications do not change the zero set inside
$
(\C^\ast)^2,
$
because the factors used to clear denominators are nonzero there.
It follows that every degenerate critical lift on the chart
$
B\neq0
$
satisfies
$
u=v=\rho=0
$
and either all $3\times3$ minors of
$
DG
$
vanish, or
$
\widetilde J_z=\widetilde J_w=0.
$

It remains to justify the final chart statement. On the chart
$
A\neq0,
$
the projective reality condition
$
[A:B]\in\R\mathbb P^1
$
is still equivalent to
$
\Ima(A\overline B)=0.
$
Thus the same critical equation
$
\rho=0
$
is valid. The difference is only the chart condition. Algebraically, the chart
$
B\neq0
$
is enforced by saturation with respect to
$
|B|^2=B\overline B,
$
while the chart
$
A\neq0
$
is enforced by saturation with respect to
$
|A|^2=A\overline A.
$
The Jacobian and immersion-degeneracy arguments are unchanged. Therefore the analogous source-degeneracy system on
$
A\neq0
$
is obtained from the same equations with saturation by
$
|A|^2
$
instead of
$
|B|^2.
$
\end{proof}


The plane-curve multiple-lift system is also explicit. Introduce $q=(x,y,s,t)$ and $q'=(x',y',s',t')$. Let $u',v',\rho'$ denote the same expressions evaluated at $q'$. Then two distinct critical lifts with the same logarithmic image satisfy
$
u=v=\rho=0,
\,
u'=v'=\rho'=0,
$ \,
$
x^2+y^2=(x')^2+(y')^2,
\,
s^2+t^2=(s')^2+(t')^2,
$
and
$
(x-x')^2+(y-y')^2+(s-s')^2+(t-t')^2\neq0.
$
The inequality is implemented algebraically by saturation with respect to the last polynomial.

Introduce $R=x^2+y^2$ and $S=s^2+t^2$. Eliminating $x,y,s,t,x',y',s',t'$ from the saturated system produces an algebraic relation in $R,S$. Its positive real solutions give multiple-lift candidates in the contour through
$\di
(p_1,p_2)=\Big(\frac12\log R,\frac12\log S\Big).
$


\section{Plane-Curve Singular-Contour Computation Theorem}

Let $f\in\C[z^{\pm1},w^{\pm1}]$ and let
$C=\{(z,w)\in(\C^\ast)^2:f(z,w)=0\}$.  We assume that $C$ is smooth.  The restriction of the 
logarithmic map
 to $C$ will be denoted as before by $L_C$.  Put
$\Sigma=\Crit(L_C)$ and $h=\Log|_\Sigma$.  The contour of the amoeba of $C$
is $\CA_C=h(\Sigma)$.
The local properness hypothesis at $p\in\CA_C$ is understood in the following
germwise form.  There are neighborhoods $W$ of $p$ and $\Omega$ of
$h^{-1}(p)$ in $\Sigma$ such that $h|_\Omega:\Omega\to W$ is proper and,
after possibly shrinking $W$, every point of $\Sigma$ mapped into $W$ belongs
to $\Omega$.  Equivalently, the contour germ at $p$ is the image germ of this
proper restricted map.  This is the precise property used below to exclude
critical points escaping from the chosen source neighborhood while their
images converge to $p$.

Throughout the proof, a point $p\in\CA_C$ is called regular if there is an
open neighborhood $W$ of $p$ in $\R^2$ such that $W\cap\CA_C$ is a smooth
embedded real curve.  The singular locus $\Sing(\CA_C)$ is the complement of
the regular locus in $\CA_C$.  Thus the word singular refers to the image
germ of the contour and not merely to a singularity of an algebraic
elimination equation.

Write
$
z=x+iy,\, w=s+it,
$
and set
$
u=\Rea f,\, v=\Ima f,\,
A=zf_z,\, B=wf_w,\,
\rho=\Ima(A\overline B).
$
All these expressions are real algebraic after multiplication by a monomial
that clears the Laurent denominators.  Such multiplication does not alter
their zero sets in $(\C^\ast)^2$.

Since $C$ is smooth and $z,w$ are nonzero, $A$ and $B$ cannot vanish
simultaneously on $C$.  Indeed, $A=B=0$ would imply $f_z=f_w=0$, contrary to
the smoothness of $C$.  Hence the two logarithmic Gauss charts
$U_A=\{A\neq0\}$ and $U_B=\{B\neq0\}$ cover $C$.  On either chart the
critical-point condition is
$
u=v=\rho=0.
$
On $U_B$, for example, $\rho=0$ says that $A/B$ is real, which is equivalent
to $[A:B]\in\R\PP^1$.  This is precisely the logarithmic Gauss
characterization of the critical locus.

Let
$
G=(u,v,\rho):\R^4\longrightarrow\R^3
$
on the real coordinate domain corresponding to a logarithmic Gauss chart.
Its Jacobian is
$$
DG=
\begin{pmatrix}
u_x&u_y&u_s&u_t\\
v_x&v_y&v_s&v_t\\
\rho_x&\rho_y&\rho_s&\rho_t
\end{pmatrix}.
$$
The singular-Jacobian system consists of $u=v=\rho=0$ together with the
vanishing of all four $3\times3$ minors of $DG$.
The differential of the ambient logarithmic map is
$$
D\Log=
\begin{pmatrix}
\dfrac{x}{x^2+y^2}&\dfrac{y}{x^2+y^2}&0&0\\
0&0&\dfrac{s}{s^2+t^2}&\dfrac{t}{s^2+t^2}
\end{pmatrix}.
$$
Let $\alpha=d\log|z|$ and $\beta=d\log|w|$ denote its two rows.  Define
$\di
J_z=\det\begin{pmatrix}DG\\ \alpha\end{pmatrix},
\,
J_w=\det\begin{pmatrix}DG\\ \beta\end{pmatrix},
$
and define their denominator-free versions by
$\di
\widetilde J_z=(x^2+y^2)J_z,\, 
\widetilde J_w=(s^2+t^2)J_w.
$
After clearing the Laurent denominators already present in $u,v,\rho$, these
are real polynomials.  Since $z,w\neq0$, multiplying by the displayed
positive factors does not change their zero sets.

The two-lift system uses
$
q=(x,y,s,t),\, q'=(x',y',s',t').
$
Primes indicate evaluation at $q'$.  It consists of
$
u=v=\rho=0,\, u'=v'=\rho'=0,
$
together with the equal-radius equations
$
x^2+y^2=(x')^2+(y')^2,\,
s^2+t^2=(s')^2+(t')^2.
$
Put
$$
\delta(q,q')=(x-x')^2+(y-y')^2+(s-s')^2+(t-t')^2.
$$
Over $\R$, the inequality $\delta\neq0$ is exactly the condition $q\neq q'$.
The phrase \emph{exact real saturated two-lift system} will mean the
constructible real locus of the displayed equations on which $\delta\neq0$.
Equivalently, it is the real algebraic system obtained by adding a new real
variable $\lambda$ and the equation
$
\lambda\delta-1=0.
$
This formulation is equivalent to saturation for the purpose of exact
off-diagonal real projection and, unlike the real zero set of the saturated
ideal by itself, it does not reintroduce diagonal points belonging only to
the Zariski closure.

The chart conditions are understood in the same exact sense.  On $U_B$ one
may add a real variable $\mu$ and the equation
$
\mu|B|^2-1=0,
$
and on $U_A$ one uses $\mu|A|^2-1=0$.  Equivalently, one saturates by
$|B|^2$ or $|A|^2$ and retains the corresponding constructible open set.

Finally introduce radius variables
$
R=x^2+y^2,\, S=s^2+t^2.
$
The exact positive real projection means the set-theoretic semialgebraic
projection of the actual real solution sets to $(R,S)\in\R_{>0}^2$.  It is
not the real zero set of a complex or real Zariski elimination ideal.  Once
$R,S>0$ are known, their logarithmic image is
$$
\ell(R,S)=\left(\frac12\log R,\frac12\log S\right).
$$

\begin{lemma}
For $q\in C$, one has $q\in\Sigma$ if and only if
$[A(q):B(q)]\in\mathbb{RP}^1$.  On either of the charts $U_A$ and $U_B$, this is
equivalent to $u(q)=v(q)=\rho(q)=0$.
\end{lemma}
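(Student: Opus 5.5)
This lemma is the $n=2$ case of the first theorem of the paper, the polynomial equations for the critical locus, and I would prove it by specializing that argument directly. The plan has three steps: a local linear-algebra reduction, a translation into the real equation $\rho=0$, and a check that the chart saturation loses nothing.

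\emph{Step 1 (linear algebra).} Fix $q\in C$ and take local holomorphic logarithms $\zeta=\log z$, $\omega=\log w$ near $q$. The chain rule gives $\partial_\zeta f(e^\zeta,e^\omega)=A$ and $\partial_\omega f(e^\zeta,e^\omega)=B$. So the complex tangent line to the logarithmic transform of $C$ at $q$ is
$$H_q=\{(a,b)\in\C^2 \mid A(q)a+B(q)b=0\},$$
and $dL_C$ is identified with $\Rea|_{H_q}$.

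Smoothness of $C$ together with $z,w\neq0$ gives $(A(q),B(q))\neq(0,0)$. The paper has already recorded this, so $[A:B]$ is well defined.

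I would then invoke the linear-algebra statement from the proof of that first theorem. For $a\in\C^2\setminus\{0\}$, the map $\Rea|_{H_a}\colon H_a\to\R^2$ fails to be surjective exactly when $[a]\in\R\PP^1$. The key points are these:
\begin{itemize}
\item If $a=\lambda r$ with $\lambda\in\C^\ast$ and $r$ real, the image is $r^\perp$.
\item Conversely, a real covector $r$ annihilating the image also kills $\Rea(iv)$. Hence it annihilates $r\cdot v$ for all $v\in H_a$, which forces proportionality with $a$.
\end{itemize}
Since $\dim_\R H_q=2$, non-surjectivity is exactly criticality. This gives $q\in\Sigma$ if and only if $[A(q):B(q)]\in\R\PP^1$.

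\emph{Step 2 (real equation).} On $U_B$, reality of $[A:B]$ is equivalent to $A/B\in\R$. Multiplying by $|B|^2>0$, this is equivalent to $A\overline B\in\R$, that is, $\rho=\Ima(A\overline B)=0$. On $U_A$ the same holds after dividing by $A$ instead, since $\Ima(B\overline A)=-\rho$.

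Membership $q\in C$ is the pair of equations $u=v=0$. So on either chart, $q\in\Sigma$ if and only if $u=v=\rho=0$. The Laurent denominators are cleared by monomials in $z,w$, which are nonvanishing on the torus, so this does not change zero sets.

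\emph{Step 3 and the main point of care.} The only subtle step is the chart restriction. The identity $\rho=0\Leftrightarrow[A:B]\in\R\PP^1$ genuinely needs $(A,B)\neq(0,0)$: without it, $\rho=0$ holds trivially. That is exactly why the statement is phrased on $U_A$ and $U_B$. Since these charts cover $C$, the equations describe all of $\Sigma$.

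I expect no real obstacle here. The one thing to verify carefully is the identification of $dL_C$ with $\Rea|_{H_q}$ under the local logarithm, which amounts to the formula $\Log=\Rea\circ(\log z,\log w)$.
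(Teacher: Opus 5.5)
Your proposal is correct and follows essentially the same route as the paper. Both pass to logarithmic coordinates, identify $T_qC$ with the kernel of $(\xi,\eta)\mapsto A\xi+B\eta$ and $dL_C$ with the real-part map, and use the same linear-algebra fact that a nonzero real covector annihilates the image exactly when $(A,B)$ is projectively real. The final reduction to $\rho=\Ima(A\overline B)=0$ on the charts $U_A$, $U_B$ also matches the paper's.
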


\begin{proof}
Let $q=(z,w)\in C$.  A tangent vector to $(\C^\ast)^2$ can be written in
logarithmic complex coordinates as $(\xi,\eta)\in\C^2$, meaning that the
corresponding infinitesimal variations are $\dot z=z\xi$ and $\dot w=w\eta$.
Differentiating $f$ gives
$
df_q(\dot z,\dot w)=A(q)\xi+B(q)\eta.
$
Consequently the complex tangent line $T_qC$ is the kernel of the nonzero
complex linear form $(\xi,\eta)\mapsto A\xi+B\eta$.  In these coordinates,
the differential of $\Log$ is
$
d\Log_q(\xi,\eta)=(\Rea\xi,\Rea\eta).
$

The real rank of $d(L_C)_q$ is smaller than two exactly when there is a
nonzero real covector $(a,b)\in\R^2$ that annihilates its image.  This means
that
$
a\Rea\xi+b\Rea\eta=0
$
for every $(\xi,\eta)$ satisfying $A\xi+B\eta=0$.  Equivalently, the real
linear functional $(\xi,\eta)\mapsto\Rea(a\xi+b\eta)$ vanishes on the complex
line $\ker(A,B)$.  This occurs exactly when $(a,b)$ is a nonzero real
multiple, in the projective sense, of $(A,B)$.  Hence
$[A:B]\in\mathbb{RP}^1$.
Since $A$ and $B$ do not vanish simultaneously, membership in
$\mathbb{RP}^1$ is equivalent to $A\overline B$ being real, or
$\Ima(A\overline B)=0$.  Together with the two real equations defining $C$,
this gives $u=v=\rho=0$.
\end{proof}

\begin{lemma}
Let $q\in\Sigma$ lie in one of the logarithmic Gauss charts.  If
$\rank DG(q)=3$, then $\Sigma$ is a smooth real analytic curve near $q$ and
$
T_q\Sigma=\ker DG(q).
$
This is equivalent to say that if $\Sigma$ is not a smooth curve germ at $q$, then every
$3\times3$ minor of $DG(q)$ vanishes.
\end{lemma}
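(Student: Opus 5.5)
The plan is to apply the regular-value (implicit function) theorem to $G=(u,v,\rho)$ on the chart, and then take the contrapositive for the second assertion.

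First, localize. Work on the logarithmic Gauss chart containing $q$, say $U_B$. This is an open subset of $(\C^\ast)^2\cong(\R^2\setminus\{0\})^2\subset\R^4$, cut out by $B\neq0$. By the preceding lemma,
$$
\Sigma\cap U_B=\{X\in U_B\mid G(X)=0\}.
$$
The functions $u,v,\rho$ are real polynomials after clearing Laurent denominators by monomials, which are nonvanishing on the torus. Hence $G$ is real analytic on $U_B$, and the zero set is unchanged by this clearing. The chart $U_A$ is handled identically, since the critical equations are the same and only the open chart condition differs.

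Second, apply the regular-value theorem, as in the proof of the Jacobian criterion of Section~1. Suppose $\rank DG(q)=3$. Then $DG(q)\colon\R^4\to\R^3$ is surjective, and the real analytic implicit function theorem gives the following. There is a neighborhood $N\subset U_B$ of $q$ in which three of the four coordinates are analytic functions of the remaining one on $G^{-1}(0)$. Thus $G^{-1}(0)\cap N$ is a real analytic embedded curve whose tangent line at $q$ is $\ker DG(q)$, which is one-dimensional. Because $N$ lies in the open chart, $G^{-1}(0)\cap N=\Sigma\cap N$, so this describes the germ of $\Sigma$ itself. I expect the only point needing care to be here: we must shrink $N$ inside $U_B$ so that no points of $\Sigma$ outside the chart interfere. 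This is automatic because $U_B$ is open in $C$ and in the ambient torus.

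Finally, prove the equivalent statement by contraposition. If $\Sigma$ is not a smooth curve germ at $q$, then $\rank DG(q)\le2$ by the above. For a $3\times4$ matrix, rank less than $3$ holds exactly when all four $3\times3$ minors vanish, since the rank equals the largest size of a nonvanishing minor. This gives the stated criterion, which is the singular-Jacobian system of the plane-curve section.
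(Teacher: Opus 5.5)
Your proposal is correct and follows the paper's proof essentially step for step. Both arguments identify $\Sigma$ with $G^{-1}(0)$ on the chart, apply the real analytic implicit function theorem when $\rank DG(q)=3$ to get a one-dimensional analytic submanifold with tangent line $\ker DG(q)$, and obtain the minor criterion by contraposition. Your extra step of shrinking the neighborhood inside the open chart is harmless but not strictly needed: on $C$ the equation $\Ima(A\overline B)=0$ already cuts out $\Sigma$ globally, because $A$ and $B$ never vanish simultaneously.
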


\begin{proof}
On the chosen chart, $\Sigma$ is exactly $G^{-1}(0)$.  If
$\rank DG(q)=3$, the real analytic implicit-function theorem shows that
$G^{-1}(0)$ is a real analytic submanifold of codimension three in $\R^4$.
It therefore has dimension one, and its tangent line is $\ker DG(q)$.  The
last assertion is the contrapositive: if some $3\times3$ minor were nonzero,
then $DG$ would have rank three and the critical locus would be smooth.
\end{proof}

\begin{lemma}
Assume that $q\in\Sigma$ and $\rank DG(q)=3$.  Then
$
d(\Log|_\Sigma)_q=0
$
if and only if
$
\widetilde J_z(q)=\widetilde J_w(q)=0.
$
\end{lemma}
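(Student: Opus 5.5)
The plan is to reduce the statement to linear algebra on the tangent line $T_q\Sigma$, using the preceding lemma to identify that line. Since $\rank DG(q)=3$, that lemma gives that $\Sigma$ is a smooth real curve near $q$ with $T_q\Sigma=\ker DG(q)$, a one-dimensional subspace of $\R^4$. The differential $d(\Log|_\Sigma)_q$ is the restriction of $D\Log(q)$ to $\ker DG(q)$. Its two components are the restrictions of the row covectors $\alpha=d\log|z|$ and $\beta=d\log|w|$. Hence $d(\Log|_\Sigma)_q=0$ if and only if both $\alpha|_{\ker DG(q)}=0$ and $\beta|_{\ker DG(q)}=0$.

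The key step is the annihilator identity. For any real matrix $M$, the covectors vanishing on $\ker M$ are exactly the row space of $M$, since $(\ker M)^\perp=\operatorname{im}M^{T}$. Because $\rank DG(q)=3$, its row space is a $3$-dimensional subspace of $(\R^4)^\ast$. The covector $\alpha$ lies in it exactly when the four rows of $\begin{pmatrix}DG(q)\\ \alpha\end{pmatrix}$ are linearly dependent. For a $4\times4$ matrix this is the condition $J_z(q)=0$. The identical argument with $\beta$ gives $\beta|_{T_q\Sigma}=0$ if and only if $J_w(q)=0$. Here the hypothesis $\rank DG(q)=3$ is essential. Without it the determinant could vanish merely because the first three rows are dependent, and the equivalence would fail.

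Finally, pass to the denominator-free forms. Since $q\in(\C^\ast)^2$, the factors $x^2+y^2$ and $s^2+t^2$ are strictly positive. Hence $\widetilde J_z(q)=0\iff J_z(q)=0$ and $\widetilde J_w(q)=0\iff J_w(q)=0$. Any monomial factors used to clear the Laurent denominators in $u,v,\rho$ are also nonzero on the torus, so the zero sets are unchanged. Combining the three steps gives the stated equivalence.

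No step is a genuine obstacle. The one point needing care is to state the row-space/annihilator identity explicitly and to use full rank to pass from "$\alpha$ lies in the row space" to "the augmented determinant vanishes". This mirrors the argument already given in the plane-curve proposition of the previous section.
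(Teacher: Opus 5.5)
Your proposal is correct and follows the paper's proof essentially step for step. You identify $T_q\Sigma=\ker DG(q)$, use that a covector vanishes on this line exactly when it lies in the three-dimensional row space of $DG(q)$, and note that by full rank this is equivalent to the vanishing of the augmented $4\times4$ determinant. You then finish with the strict positivity of $x^2+y^2$ and $s^2+t^2$, exactly as the paper does.
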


\begin{proof}
The tangent space $T_q\Sigma=\ker DG(q)$ is a one-dimensional real vector
space.  Choose a nonzero vector $\tau$ spanning it.  The restricted
logarithmic differential vanishes precisely when
$
\alpha(\tau)=0$ and $\beta(\tau)=0.$
Since $DG(q)$ has rank three, its row space has dimension three and its
annihilator is the line $\R\tau$.  For any row covector $\gamma$ on
$\R^4$, the determinant of the matrix obtained by adjoining $\gamma$ to
$DG(q)$ vanishes exactly when $\gamma$ lies in the row space of $DG(q)$.
The latter condition is equivalent to $\gamma(\tau)=0$.  Applying this with
$\gamma=\alpha$ and $\gamma=\beta$ gives
$$
J_z(q)=0\Longleftrightarrow\alpha(\tau)=0,
\qquad
J_w(q)=0\Longleftrightarrow\beta(\tau)=0.
$$
The factors $x^2+y^2$ and $s^2+t^2$ are strictly positive on
$(\C^\ast)^2$, so $J_z,J_w$ vanish exactly when
$\widetilde J_z,\widetilde J_w$ vanish.
\end{proof}

\begin{lemma}
Let $q\in\Sigma$ be a smooth point of $\Sigma$ such that
$d(\Log|_\Sigma)_q\neq0$.  Then there is a neighborhood $U$ of $q$ in
$\Sigma$ for which $\Log|_U$ is an embedding and $\Log(U)$ is a smooth
embedded real curve.
\end{lemma}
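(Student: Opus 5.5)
The plan is to reduce the statement to the standard fact that an injective immersion of a one-dimensional manifold is locally an embedding. First I will record the smooth structure. Since $q$ is a smooth point of $\Sigma$, the preceding lemma provides a neighborhood of $q$ in $\Sigma$ that is a real analytic curve. Concretely, in the logarithmic Gauss chart containing $q$, one has $\rank DG(q)=3$ and $T_q\Sigma=\ker DG(q)$. Choose a real analytic parametrization $\gamma:(-\varepsilon,\varepsilon)\to\Sigma$ with $\gamma(0)=q$ and $\gamma'(0)=\tau\neq0$. For instance, take the implicit-function-theorem graph over a coordinate on which $\tau$ has nonzero component. The image of $\gamma$ is an open neighborhood of $q$ in $\Sigma$, and $\gamma$ is a homeomorphism onto it.

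Next I will translate the hypothesis $d(\Log|_\Sigma)_q\neq0$ into a statement about the composite curve $c=\Log\circ\gamma:(-\varepsilon,\varepsilon)\to\R^2$. The hypothesis says $c'(0)=(\alpha(\tau),\beta(\tau))\neq0$. By continuity, $c'(r)\neq0$ for $|r|$ small, so $c$ is an immersion near $0$. Without loss of generality assume $\alpha(\tau)\neq0$; the case $\beta(\tau)\neq0$ is symmetric. Then the first component $c_1(r)=\log|z(\gamma(r))|$ has $c_1'(0)\neq0$. By the one-variable inverse function theorem, $c_1$ is a diffeomorphism from a small interval $(-\varepsilon',\varepsilon')$ onto an open interval $I$.

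The key step is then immediate. After shrinking, $c$ is injective, and $c((-\varepsilon',\varepsilon'))$ is the graph $\{(p_1,\,c_2(c_1^{-1}(p_1)))\mid p_1\in I\}$ of a smooth function over $I$. A graph of a smooth function is a smooth embedded real curve in $\R^2$, and the map $r\mapsto c(r)$ is a homeomorphism onto this graph, with inverse $p\mapsto c_1^{-1}(p_1)$. Setting $U=\gamma((-\varepsilon',\varepsilon'))$, the map $\Log|_U=c\circ\gamma^{-1}$ is an injective immersion. Its inverse onto the image is $\gamma\circ c_1^{-1}\circ\mathrm{pr}_1$, which is continuous, so $\Log|_U$ is an embedding. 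One could alternatively cite \cite[Chapter~2, Theorem~5.1]{Hirsch76} as in the earlier theorem, but the graph argument is self-contained.

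I do not expect a serious obstacle. The only point needing care is to guarantee a genuine embedding, as opposed to a mere immersion whose image could be non-embedded. This is why I will use the graph description over a coordinate axis rather than only the rank condition. I will also note that the neighborhood $U$ is open in $\Sigma$ but not asserted to exhaust the preimage of a neighborhood of $\Log(q)$. Excluding other branches is the role of the properness hypothesis, which is used later in the paper and is not part of this lemma.
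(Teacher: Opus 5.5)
Your argument is correct and is essentially the paper's: the paper invokes the constant-rank theorem to put $\Log|_\Sigma$ in the normal form $\tau\mapsto(\tau,0)$ and then shrinks. Your graph over a coordinate axis, built with the one-variable inverse function theorem, is exactly that normal form made explicit. One small caution: smoothness of $\Sigma$ at $q$ does not by itself give $\rank DG(q)=3$, since the preceding lemma only proves the implication from rank three to smoothness. You never actually need that rank condition, only a regular local parametrization $\gamma$ of the curve germ, and smoothness of $\Sigma$ at $q$ supplies one.
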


\begin{proof}
The domain $\Sigma$ is one-dimensional near $q$.  The nonzero differential
has rank one.  By the constant-rank theorem there are local coordinates
$\tau$ on $\Sigma$ and $(X,Y)$ on $\R^2$ in which the map is
$\tau\mapsto(\tau,0)$.  Shrinking the coordinate neighborhood makes the map
injective and a homeomorphism onto its image, with injective differential.
It is therefore an embedding onto a smooth embedded curve.
\end{proof}

\begin{lemma}
Let $p\in\CA_C$.  Assume that there are neighborhoods $\Omega\subset\Sigma$
of $h^{-1}(p)$ and $W\subset\R^2$ of $p$ such that
$h:\Omega\to W$ is proper and $h^{-1}(p)$ is finite.  Write
$
h^{-1}(p)=\{q_1,\ldots,q_m\}.
$
For every choice of pairwise disjoint neighborhoods
$U_j\subset\Omega$ of $q_j$, after shrinking them if necessary, there is a
neighborhood $W'\subset W$ of $p$ such that
$$
h^{-1}(W')\cap\Omega\subset\bigcup_{j=1}^mU_j.
$$
\end{lemma}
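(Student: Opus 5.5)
The plan is a contradiction argument by sequential compactness, using the properness of $h:\Omega\to W$ and the finiteness of the fiber $h^{-1}(p)=\{q_1,\ldots,q_m\}$. The fiber is finite, and $\Sigma$ is a subset of $(\C^\ast)^2$, which is metrizable and Hausdorff. So we may first shrink the given neighborhoods $U_j\subset\Omega$ so that they are open in $\Sigma$ and pairwise disjoint. The union $U=\bigcup_jU_j$ is then an open neighborhood of $h^{-1}(p)$ in $\Omega$.

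Next, take a countable decreasing neighborhood basis $W_\nu\subset W$ of $p$, for instance open balls of radius $1/\nu$. Suppose that no $W'$ has the stated property. Then for every $\nu$ there is a point
$$
q^{(\nu)}\in\bigl(h^{-1}(W_\nu)\cap\Omega\bigr)\setminus U .
$$
By construction $h(q^{(\nu)})\to p$. The set $K=\overline{W_1}$ can be chosen as a compact subset of $W$. Properness of $h:\Omega\to W$ then gives that $h^{-1}(K)\cap\Omega$ is compact, and all $q^{(\nu)}$ lie in it. After passing to a subsequence, $q^{(\nu)}\to q_\infty\in\Omega$. Continuity of $h$ gives $h(q_\infty)=p$, so $q_\infty=q_j$ for some $j$.

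Since $U_j$ is an open neighborhood of $q_j$ in $\Omega$, we get $q^{(\nu)}\in U_j\subset U$ for all large $\nu$. This contradicts the choice of $q^{(\nu)}$. Hence some $W_\nu$ works as $W'$, and $h^{-1}(W')\cap\Omega\subset\bigcup_jU_j$.

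The main point needing care is the topological setup rather than the limit argument itself. We need a compact neighborhood of $p$ inside $W$, which is fine since $W$ is an open subset of $\R^2$. We also need the $U_j$ to be open in $\Omega$, which the phrase ``after shrinking them if necessary'' allows, and $\Omega$ open in $\Sigma$ so that convergence inside $\Omega$ is meaningful. Finally, compactness must give sequential compactness, which holds because everything is metrizable. If the properness hypothesis is read only as closedness with compact fibers, I would instead use the equivalent formulation that a proper map into a locally compact Hausdorff space pulls compact sets back to compact sets. That is the form used above.
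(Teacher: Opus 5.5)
Your proof is correct and follows essentially the same route as the paper's proof: assume no $W'$ works, choose points outside $\bigcup_j U_j$ whose images converge to $p$, use properness over a compact neighborhood of $p$ in $W$ to extract a convergent subsequence, and observe that its limit must be some $q_j$, which contradicts $U_j$ being a neighborhood of $q_j$. Your extra remarks (making the $U_j$ open, metrizability giving sequential compactness, and the equivalent formulations of properness) only make explicit details the paper leaves implicit.
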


\begin{proof}
Suppose no such $W'$ existed.  Choose a decreasing neighborhood basis
$W_\nu$ of $p$ with compact closures contained in $W$.  There would be points
$$
r_\nu\in h^{-1}(W_\nu)\cap
\left(\Omega\setminus\bigcup_{j=1}^mU_j\right).
$$
Fix a compact neighborhood $K\subset W$ of $p$ containing all $W_\nu$ for
large $\nu$.  Properness makes $h^{-1}(K)\cap\Omega$ compact.  A subsequence
of $r_\nu$ converges to some $r_\infty\in\Omega$.  Since $h(r_\nu)\to p$,
continuity gives $h(r_\infty)=p$.  Thus $r_\infty=q_j$ for some $j$.  But
$U_j$ is a neighborhood of $q_j$, whereas every $r_\nu$ lies outside
$\bigcup_jU_j$, a contradiction.
\end{proof}

\begin{proposition}[Source of a singular contour point]
Under the hypotheses of the preceding lemma, if $p\in\Sing(\CA_C)$, then
either at least one $q\in h^{-1}(p)$ is a singular point of $\Sigma$ or a
critical point of $h$, or $h^{-1}(p)$ contains at least two distinct points.
\end{proposition}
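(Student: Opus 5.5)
We argue by contraposition. Assume that $h^{-1}(p)$ consists of a single point $q$, that $\Sigma$ is a smooth curve germ at $q$, and that $q$ is not a critical point of $h$, i.e.\ $d(\Log|_\Sigma)_q\neq0$. We will show that $p$ is a regular point of $\CA_C$. Since $p\in\CA_C$, the fiber is nonempty, so the negation of the conclusion is exactly this situation: one lift, smooth and nondegenerate.

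\emph{Step 1 (local embedding).} Apply the embedding lemma above to $q$. It gives a neighborhood $U$ of $q$ in $\Sigma$ such that $h|_U$ is an embedding and $M=h(U)$ is a smooth embedded real curve through $p$. Shrinking $U$, we may take $U\subset\Omega$. Shrinking further, we may assume $M$ is closed in some open set $W_0\ni p$. For instance, take $U$ to be a compact arc neighborhood minus its endpoints; then $M\cap W_0$ is a properly embedded arc once $W_0$ avoids the images of the endpoints.

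\emph{Step 2 (no escaping branches).} Apply the preceding lemma with $m=1$ and $U_1=U$. It yields a neighborhood $W'\subset W$ of $p$ with $h^{-1}(W')\cap\Omega\subset U$. By the germwise form of the local properness hypothesis, after shrinking $W'$ every point of $\Sigma$ mapped into $W'$ lies in $\Omega$. Hence $\CA_C\cap W'=h(\Sigma)\cap W'=h(U)\cap W'=M\cap W'$.

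\emph{Step 3 (conclusion).} Replace $W'$ by $W'\cap W_0$. Then $\CA_C\cap W'$ is an open piece of the embedded curve $M$, so $p$ is regular. This contradicts $p\in\Sing(\CA_C)$ and proves the proposition.

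The main obstacle is the bookkeeping in Step 2. The preceding lemma controls only points of $\Omega$, so we must explicitly invoke the germwise properness convention to rule out critical points outside $\Omega$ whose images accumulate at $p$. We must also ensure that $M\cap W'$ is genuinely embedded near $p$, rather than being a curve that re-approaches $p$; this is what the choice of $W_0$ in Step 1 handles.
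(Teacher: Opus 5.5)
Your proof is correct and follows the paper's argument essentially step for step: contraposition, the local embedding lemma giving an embedded arc $h(U)$, the isolation lemma applied to the single lift (with your explicit choice $U\subset\Omega$), and then the germwise properness convention to conclude $\CA_C\cap W'=h(U)\cap W'$. The extra care with $W_0$ in Step 1 is harmless but not needed, since any open subset of the embedded curve $h(U)$ is already a smooth embedded curve.
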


\begin{proof}
It suffices to prove the contrapositive.  Suppose that $h^{-1}(p)=\{q\}$,
that $\Sigma$ is smooth at $q$, and that $dh_q\neq0$.  By the local embedding
lemma, there is a neighborhood $U$ of $q$ such that $h|_U$ is an embedding
onto a smooth embedded curve.  The isolation lemma gives a neighborhood
$W'$ of $p$ for which $h^{-1}(W')\cap\Omega\subset U$.  After shrinking $W'$
once more, the germwise local properness convention ensures that all contour
contributions near $p$ belong to $h(U)$.  Hence
$\CA_C\cap W'=h(U)\cap W'$ is a smooth embedded curve.  Thus $p$ is regular,
which proves the contrapositive.
\end{proof}

\begin{theorem}
Let $C=\{f(z,w)=0\}\subset(\C^\ast)^2$ be smooth. Assume local properness of
$\Log|_\Sigma$ near the contour point under consideration and finite critical
fibers. Every singular point of $\CA_C$ is obtained from one of the following
explicit real systems: the critical equations together with the
singular-Jacobian equations for $DG$; the critical equations together with
$\widetilde J_z=\widetilde J_w=0$; or the saturated two-lift system above.

If every pair of distinct nondegenerate critical lifts over the same
logarithmic point determines distinct local contour branches, and every
projected degenerate lift retained in the computation has a singular image
germ, then the exact positive real projection of the union of these systems,
followed by
$(R,S)\mapsto(\frac12\log R,\frac12\log S)$, is exactly the singular locus of
the contour.
\end{theorem}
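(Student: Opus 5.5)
The plan is to assemble the theorem from the lemmas just proved, in two halves: an inclusion and a reverse inclusion.

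\emph{Inclusion.} Let $p\in\Sing(\CA_C)$. By the local properness and finite-fiber assumptions, the Proposition on the source of a singular contour point applies at $p$. It yields three alternatives, which I will treat separately.
\begin{enumerate}
\item Some lift $q\in h^{-1}(p)$ is a singular point of $\Sigma$.
\item Some lift $q$ is a smooth point of $\Sigma$ with $dh_q=0$.
\item The fiber $h^{-1}(p)$ contains two distinct points $q\neq q'$.
\end{enumerate}
In each case I first pick a logarithmic Gauss chart containing the lift. This is possible because $U_A\cup U_B$ covers $C$, as smoothness forbids $A=B=0$. Then:
\begin{enumerate}
\item In case (1), the lemma on smoothness of $\Sigma$ gives $\rank DG(q)<3$. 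So $q$ solves the singular-Jacobian system together with the chart equation $\mu|B|^2=1$ or $\mu|A|^2=1$ for a suitable real $\mu$.
\item In case (2), the lemma on the restricted differential gives $\widetilde J_z(q)=\widetilde J_w(q)=0$.
\item In case (3), both points satisfy $u=v=\rho=0$ by the critical-locus lemma. Equality of $\Log$ gives equal radii. Since $q\neq q'$ we have $\delta(q,q')>0$, so $\lambda=1/\delta$ solves $\lambda\delta-1=0$.
\end{enumerate}
In all three cases, setting $R=|z|^2$ and $S=|w|^2$ gives $p=\ell(R,S)$, with $(R,S)$ in the exact positive real projection. No elimination closure is involved, so this half is purely set-theoretic.

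\emph{Reverse inclusion.} Take $(R,S)$ in the exact projection of one of the systems, and put $p=\ell(R,S)$.

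If $(R,S)$ comes from a degenerate system, then there is an actual real degenerate lift. The chart variable $\mu$ guarantees it lies in $U_A$ or $U_B$, hence in $\Sigma$ by the critical-locus lemma. By the hypothesis on retained degenerate lifts, $p\in\Sing(\CA_C)$.

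If $(R,S)$ comes from the two-lift system, then there are real distinct $q,q'\in\Sigma$ with $h(q)=h(q')=p$. If either lift is degenerate, we are back in the previous case. Otherwise, the local embedding lemma gives neighborhoods $U\ni q$ and $U'\ni q'$ whose images are smooth embedded curve germs through $p$. By hypothesis these germs are distinct. I then repeat the argument from the general elimination theorem. If $\CA_C$ were a smooth embedded curve $M$ near $p$, each image germ would be an open subgerm of $M$ by invariance of domain in dimension one. Both would then equal the germ of $M$, which contradicts their distinctness. Hence $p$ is singular.

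\emph{Main obstacle.} I expect the delicate point to be the exactness bookkeeping in the reverse inclusion. One must ensure that every retained solution is a genuine point of $\Sigma\cap(\C^\ast)^2$, and that the two-lift solutions are genuinely off-diagonal. This is exactly why the auxiliary variables $\lambda,\mu$ are used rather than Zariski saturation. With those variables, the real solution sets are literally the constructible loci, so diagonal points or points with $A=B=0$ cannot appear through closure. The constraint $R,S>0$ is automatic on $(\C^\ast)^2$. I will state explicitly that the equality concerns semialgebraic projections and not elimination ideals, as the paper stresses.
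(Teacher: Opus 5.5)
Your proposal is correct and follows the paper's proof essentially step for step: the source proposition splits into three cases, each lift is placed chartwise into one of the three exact systems, and the reverse inclusion uses the degenerate-lift hypothesis together with the argument that two distinct smooth germs cannot both fill a smooth curve germ $M$. One small bookkeeping point, which the paper handles explicitly, concerns your case (2): the restricted-differential lemma requires $\rank DG(q)=3$, so you should add that if $\Sigma$ is smooth at $q$ but $\rank DG(q)<3$, then $q$ already lies in the singular-Jacobian system (and there $\widetilde J_z=\widetilde J_w=0$ holds trivially, since the three rows of $DG(q)$ are dependent).
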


\begin{proof}
Let $\mathscr E_{\mathrm{Jac}}$ be the exact real solution set, over both
logarithmic Gauss charts, of the equations
$$
u=v=\rho=0
$$
and all $3\times3$ minors of $DG$.  Let
$\mathscr E_{\mathrm{imm}}$ be the exact real solution set of
$$
u=v=\rho=\widetilde J_z=\widetilde J_w=0.
$$
The chart inequalities $|A|^2>0$ or $|B|^2>0$ and the torus inequalities
$x^2+y^2>0$, $s^2+t^2>0$ are retained.  Let
$\mathscr E_{\mathrm{off}}$ be the exact off-diagonal two-lift set.  Thus its
points are pairs $(q,q')$ satisfying both copies of the critical equations,
the two equal-radius equations, and $\delta(q,q')\neq0$.

Adjoin $R=x^2+y^2$ and $S=s^2+t^2$ to the first two systems.  For the
two-lift system use the common values
$$
R=x^2+y^2=(x')^2+(y')^2,\qquad
S=s^2+t^2=(s')^2+(t')^2.
$$
Let $E\subset\R_{>0}^2$ be the union of the exact radius projections of
$\mathscr E_{\mathrm{Jac}}$, $\mathscr E_{\mathrm{imm}}$, and
$\mathscr E_{\mathrm{off}}$.  We prove
$$
\Sing(\CA_C)=\ell(E).
$$

First consider $p\in\Sing(\CA_C)$.  By the source proposition, there are
three geometric possibilities.

Suppose that some lift $q\in h^{-1}(p)$ is a singular point of $\Sigma$.
Choose one of the charts $U_A,U_B$ containing $q$.  If $DG(q)$ had rank
three, the regularity lemma would make $\Sigma$ a smooth curve near $q$.
Therefore $\rank DG(q)<3$, so all four $3\times3$ minors of $DG(q)$ vanish.
The point $q$ belongs to $\mathscr E_{\mathrm{Jac}}$.

Suppose instead that $\Sigma$ is smooth at a lift $q$, but $q$ is a critical
point of $h$.  If $\rank DG(q)<3$, then $q$ already belongs to
$\mathscr E_{\mathrm{Jac}}$.  If $\rank DG(q)=3$, the determinantal immersion
criterion gives
$
\widetilde J_z(q)=\widetilde J_w(q)=0.
$
Hence $q\in\mathscr E_{\mathrm{imm}}$.

In the remaining possibility the fiber contains distinct points $q\neq q'$.
Both satisfy the critical equations.  Since $h(q)=h(q')=p$, one has
$
|z|=|z'|,\, |w|=|w'|,
$
and therefore the equal-radius equations hold.  Since $q\neq q'$, one has
$\delta(q,q')>0$.  Thus $(q,q')\in\mathscr E_{\mathrm{off}}$.  The two
logarithmic Gauss charts cover every lift, so possibly different chart choices
for $q$ and $q'$ cause no omission.
In all three cases, setting
$
R=|z|^2,\, S=|w|^2
$
gives a point $(R,S)\in E$, and
$$
p=(\log|z|,\log|w|)
=\left(\frac12\log R,\frac12\log S\right)=\ell(R,S).
$$
This proves
$
\Sing(\CA_C)\subset\ell(E).
$
For the reverse inclusion, let $(R,S)\in E$ and set $p=\ell(R,S)$.  Suppose
first that $(R,S)$ is represented by a degenerate lift in
$\mathscr E_{\mathrm{Jac}}\cup\mathscr E_{\mathrm{imm}}$.  The equality
$u=v=\rho=0$ ensures that this is an actual critical lift of $C$, not merely
a solution of an eliminated closure.  By the hypothesis of the theorem,
every projected degenerate lift retained in the computation has a singular
image germ.  Hence the contour germ at $p$ is singular, so
$p\in\Sing(\CA_C)$.

Suppose next that $(R,S)$ is represented by a point
$(q,q')\in\mathscr E_{\mathrm{off}}$.  The equation
$\lambda\delta-1=0$, or equivalently the retained inequality $\delta>0$,
guarantees that $q\neq q'$.  If either lift is degenerate, the common radius
point also occurs in one of the degenerate-lift systems, and the preceding
paragraph applies.  We may therefore assume that both lifts are
nondegenerate.

By the local embedding lemma, sufficiently small neighborhoods $U$ of $q$
and $U'$ of $q'$ map to smooth embedded contour branches
$
\Gamma=h(U),\,  \Gamma'=h(U')
$
through $p$.  By hypothesis these branches are distinct as germs.  If the
contour germ at $p$ were a smooth embedded curve germ $M$, then both
$\Gamma$ and $\Gamma'$ would be open one-dimensional submanifold germs of
$M$ through $p$.  In a sufficiently small connected coordinate interval of
$M$ containing $p$, each such branch contains a neighborhood of $p$ in $M$.
Consequently $\Gamma$ and $\Gamma'$ would coincide as germs, contradicting
the hypothesis.  Therefore the contour germ is not a smooth embedded curve,
and $p\in\Sing(\CA_C)$.
We have proved $\ell(E)\subset\Sing(\CA_C)$.  Together with the first
inclusion, this yields
$$
\Sing(\CA_C)=
\left\{
\left(\frac12\log R,\frac12\log S\right):
(R,S)\in E
\right\}.
$$
This is the asserted exact computation.
\end{proof}

\begin{proposition}
The equality in the theorem concerns the exact semialgebraic projection of
the real constructible systems.  Replacing it by the real zero set of a
Zariski elimination ideal gives, in general, only an algebraic candidate set
containing the required projection.
\end{proposition}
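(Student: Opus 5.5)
The statement has two parts, and I will prove them separately. The first part says the exact semialgebraic projection is contained in the real zero set of the elimination ideal. The second says that this containment can be strict in general.

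For the containment, let $\mathscr E$ be any of the three exact real systems $\mathscr E_{\mathrm{Jac}}$, $\mathscr E_{\mathrm{imm}}$, $\mathscr E_{\mathrm{off}}$, with the radius variables $R,S$ adjoined. Let $I$ be the corresponding polynomial ideal in the full set of variables (source variables, $R,S$, and the auxiliary $\lambda,\mu$), and let $I\cap\R[R,S]$ be the elimination ideal. If $(R_0,S_0)$ lies in the exact projection, there is a real point $P$ of $\mathscr E$ over it. Every $g\in I\cap\R[R,S]$ is a combination of the generators of $I$, and all generators vanish at $P$. Hence $g(R_0,S_0)=g(P)=0$. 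So the exact projection lies in the real zero set of the elimination ideal.

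The same holds if the Rabinowitsch variables are replaced by saturation. Saturation only enlarges the ideal by elements $g$ with $\delta^k g\in I$ (respectively $|B|^{2k} g\in I$). At a point where $\delta\neq0$ and $|B|\neq0$ such $g$ still vanish, because $\delta^k g$ does. The torus inequalities add nothing new, since they are open conditions satisfied at $P$.

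For strictness, I will point to the two mechanisms the paper already isolates.
\begin{itemize}
\item Elimination computes the Zariski closure of the complex projection. Its real points can include $(R,S)$ whose whole fiber is nonreal, or whose only real preimages violate $x^2+y^2=R$ with $R>0$.
\item The saturated ideal's real zero set can contain limit points on the diagonal $\delta=0$ or on the chart boundary $|B|^2=0$. These belong only to the closure.
\end{itemize}
The plan is to give a minimal concrete instance of the first mechanism: a polynomial system whose elimination ideal in $R$ has a positive real root with no real preimage. A toy case is $x^2+y^2=R$ together with an equation forcing $x^2+y^2+1=0$ at a parameter value, or more simply a real algebraic set whose projection is a half-line while its Zariski closure is the full line. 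This shows that in general one obtains only a candidate set. Since the proposition says ``in general,'' one such witness suffices. I would phrase it through the standard real-projection facts of \cite{BochnakCosteRoy98}: Tarski--Seidenberg projections are semialgebraic but not algebraic.

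The main obstacle is deciding how concrete the strictness witness must be. A generic semialgebraic example is easy. A witness arising from an actual amoeba contour system would need an explicit curve. I would first try the generic argument and remark that the paper's worked example, where nonreal critical lifts appear, illustrates the phenomenon. Only if the statement must be read as being about contour systems specifically would I try to extract an extraneous real root from the elimination polynomial of that example.
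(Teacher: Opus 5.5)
Your proposal is correct and follows the paper's argument. Both rest on the fact that elimination, and saturation by $\delta$, produce the Zariski closure of the complex projection. That closure contains the exact real projection, but it may add real radii whose fibres are nonreal, or diagonal limit points. You additionally verify the containment explicitly, including for the saturated ideals, which the paper leaves implicit. Neither you nor the paper gives a contour-specific witness for strictness.

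One caution about your toy witness: the extraneous radii must be positive. The system $R=x^2+y^2$ alone projects onto $[0,\infty)$, so it shows no discrepancy once $R>0$ is imposed. The system $R=x^2+y^2$, $x=1$ works: its real projection is $[1,\infty)$, while its elimination ideal is zero.
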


\begin{proof}
For an ideal $I\subset\R[X,R,S]$, the elimination ideal
$I\cap\R[R,S]$ defines the Zariski closure of the projection of the complex
algebraic set and need not define the exact projection of the real locus.
Projection can create inequalities, and a projected constructible set need
not be Zariski closed.  Likewise, the variety of
$I:\langle\delta\rangle^\infty$ is the Zariski closure of the part of
$V(I)$ lying off $\delta=0$; its closure can meet the diagonal.  The auxiliary
equation $\lambda\delta-1=0$ retains exactly $\delta\neq0$ before projection.
Real quantifier elimination, cylindrical algebraic decomposition, or exact
real-root isolation applied to the full system computes the required
semialgebraic projection.  The final restriction $R>0,S>0$ and the logarithm
are imposed only after this exact projection.
\end{proof}


\section{Singular Points of the Logarithmic Contour for\\
$z^2w^2+9w+z+3zw+1=0$}

Consider the Laurent polynomial
$
f(z,w)=z^2w^2+9w+z+3zw+1
$
and the smooth affine curve
$
C=\{f=0\}\subset(\C^\ast)^2.
$
The logarithmic map is
$
\Log(z,w)=(\log|z|,\log|w|).
$
Its critical locus on $C$ is denoted by
$
\Sigma=\Crit(\Log|_C),
$
and the set of critical values, or contour, is
$
\CA_C=\Log(\Sigma).
$

For this polynomial the logarithmic derivatives are
$
A=zf_z=z(2zw^2+1+3w)
$
and
$
B=wf_w=w(2z^2w+9+3z).
$
Since $C$ is smooth, the logarithmic Gauss map is
$
\gamma_{\log}=[A:B].
$
Consequently,
$
(z,w)\in\Sigma
$
if and only if
$
f(z,w)=0
$
and
$
[A:B]\in\R\mathbb P^1.
$
Equivalently, one has
$
\Ima(A\overline B)=0.
$
Thus the critical locus is the real algebraic set determined inside $(\C^\ast)^2$ by
$$
\Rea f=0,\qquad \Ima f=0,\qquad
\Ima\!\left(
z(2zw^2+1+3w)\,
\overline{w(2z^2w+9+3z)}
\right)=0.
$$

The singular points of the contour arise from source-degenerate critical points and from distinct critical lifts having the same logarithmic image. For the present curve, numerical solution of the corresponding exact real systems gives seven singular logarithmic values. Four of them come from pairs of real critical points and admit simple exact expressions. Two further points come from a real critical lift and a genuinely non-real critical lift. The remaining point comes from a degenerate non-real critical lift and is a cusp-type value.

\subsection*{Singular values arising from real lifts}

Since all coefficients of $f$ are real, every real point of $C$ is critical for $\Log|_C$. Indeed, when $z,w\in\R^\ast$, both $A$ and $B$ are real, and therefore
$
\Ima(A\overline B)=0.
$
Distinct real points have the same logarithmic image precisely when their coordinates differ only by signs. We therefore compare
$
(z,w)
$
with
$
(-z,w),
$
$
(z,-w),
$
and
$
(-z,-w).
$

Suppose first that both $(z,w)$ and $(-z,w)$ lie on $C$. Subtracting the two curve equations gives
$
f(-z,w)-f(z,w)=-2z(3w+1).
$
Since $z\neq0$, it follows that
$
w=-1/3.
$
Substitution in $f(z,w)=0$ gives
$
z^2/9-2=0,
$
hence
$
z=\pm3\sqrt2.
$
The corresponding singular logarithmic value is therefore
$$
p_1=\left(\log(3\sqrt2),-\log3\right)
   =\left(1.445185878948\ldots,-1.098612288668\ldots\right).
$$

Suppose next that both $(z,w)$ and $(z,-w)$ lie on $C$. One has
$
f(z,-w)-f(z,w)=-6w(z+3).
$
Since $w\neq0$, this gives
$
z=-3.
$
Substitution in the curve equation yields
$
9w^2-2=0,
$
so
$
w=\pm\sqrt2/3.
$
The resulting singular value is
$$
p_2=\left(\log3,\log\frac{\sqrt2}{3}\right)
   =\left(1.098612288668\ldots,-0.752038698388\ldots\right).
$$

Suppose finally that both $(z,w)$ and $(-z,-w)$ lie on $C$. The difference of the two equations is
$
f(-z,-w)-f(z,w)=-2(9w+z).
$
Hence
$
z=-9w.
$
Substituting in $f(z,w)=0$ gives
$
81w^4-27w^2+1=0.
$
Writing $Y=w^2$, one obtains
$
81Y^2-27Y+1=0,
$
and therefore
$
Y=(3\pm\sqrt5)/18.
$
Equivalently, the relevant absolute values are
$
|w|=(\sqrt5-1)/6
$
with
$
|z|=3(\sqrt5-1)/2,
$
or
$
|w|=(\sqrt5+1)/6
$
with
$
|z|=3(\sqrt5+1)/2.
$
This gives the two singular logarithmic values
$$
p_3=
\left(
\log\frac{3(\sqrt5-1)}2,
\log\frac{\sqrt5-1}{6}
\right)
=
\left(
0.617400463609\ldots,
-1.579824113728\ldots
\right)
$$
and
$$
p_4=
\left(
\log\frac{3(\sqrt5+1)}2,
\log\frac{\sqrt5+1}{6}
\right)
=
\left(
1.579824113728\ldots,
-0.617400463609\ldots
\right).
$$

At each of these four values the contour has at least two distinct real critical lifts. Direct evaluation of the tangent directions shows that the corresponding local contour germs are distinct, so these points are genuine self-intersection singularities of the contour.

\subsection*{The two mixed real--nonreal singular values}

The saturated two-lift system consists of two copies of the critical equations together with the equal-radius equations
$
|z|^2=|z'|^2
$
and
$
|w|^2=|w'|^2,
$
followed by saturation by the diagonal ideal. Solving this system away from the real sign-pair solutions produces two additional isolated positive-radius solutions.

The first one has logarithmic coordinates
$
p_5=
\left(
0.508946623429\ldots,
-1.618749051361\ldots
\right).
$
Its radii are
$
|z|=1.663537939\ldots
$
and
$
|w|=0.198146415\ldots.
$
One critical lift is real, with arguments
$
(\arg z,\arg w)=(0,\pi),
$
whereas a second critical lift has arguments
$
(\arg z',\arg w')=(-2.596798733533\ldots,1.623210504771\ldots).
$
Substitution into the curve equation and the logarithmic critical equation gives residuals below $10^{-14}$ in double-precision arithmetic.

The second mixed singular value is
$
p_6=
\left(
0.578475525975\ldots,
-1.688277953907\ldots
\right).
$
Its radii are
$
|z|=1.783317735\ldots
$
and
$
|w|=0.184837549\ldots.
$
One lift has arguments
$
(\arg z,\arg w)=(-\pi,0),
$
and another has arguments
$
(\arg z',\arg w')=(-1.623210504771\ldots,2.596798733533\ldots).
$
Again the defining equations are satisfied numerically to better than $10^{-14}$.

The two lifts over each of $p_5$ and $p_6$ are distinct and are not related merely by complex conjugation. The restricted logarithmic map has rank one at the corresponding lifts, and the two resulting local image germs have distinct tangent directions. Consequently $p_5$ and $p_6$ are ordinary multiple-branch singularities of the contour.

\subsection*{The degenerate non-real critical value}

The source-degeneracy equations are the critical equations together with the vanishing of the two determinants expressing the failure of immersion of
$
\Log|_\Sigma.
$
Solving this system yields a conjugate pair of non-real degenerate critical lifts with common logarithmic image
$$
p_7=
\left(
0.432259417575\ldots,
-1.764965159763\ldots
\right).
$$
The corresponding radii are
$
|z|=1.540748\ldots
$
and
$
|w|=0.171149\ldots.
$
A representative lift has arguments approximately
$
(\arg z,\arg w)=(2.21916477\ldots,-2.21916477\ldots),
$
and its complex conjugate gives the same logarithmic value. At these lifts the differential of
$
\Log|_\Sigma
$
vanishes. Hence the local image is not a regular embedded contour branch, and $p_7$ is the cusp-type singular value.

 \begin{figure}[ht]
\centering
\includegraphics[width=.42\textwidth]{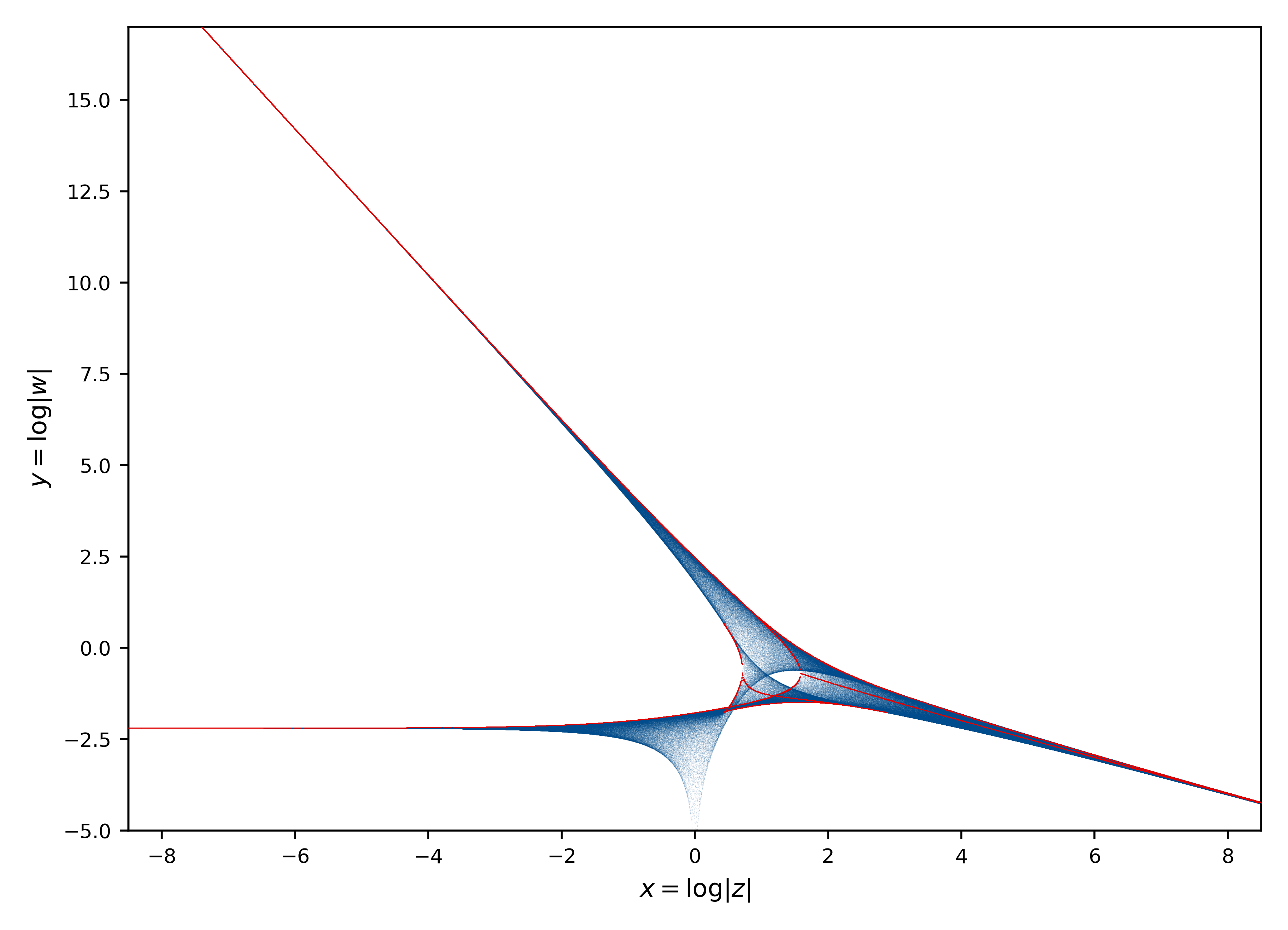} \qquad. \includegraphics[width=.42\textwidth]{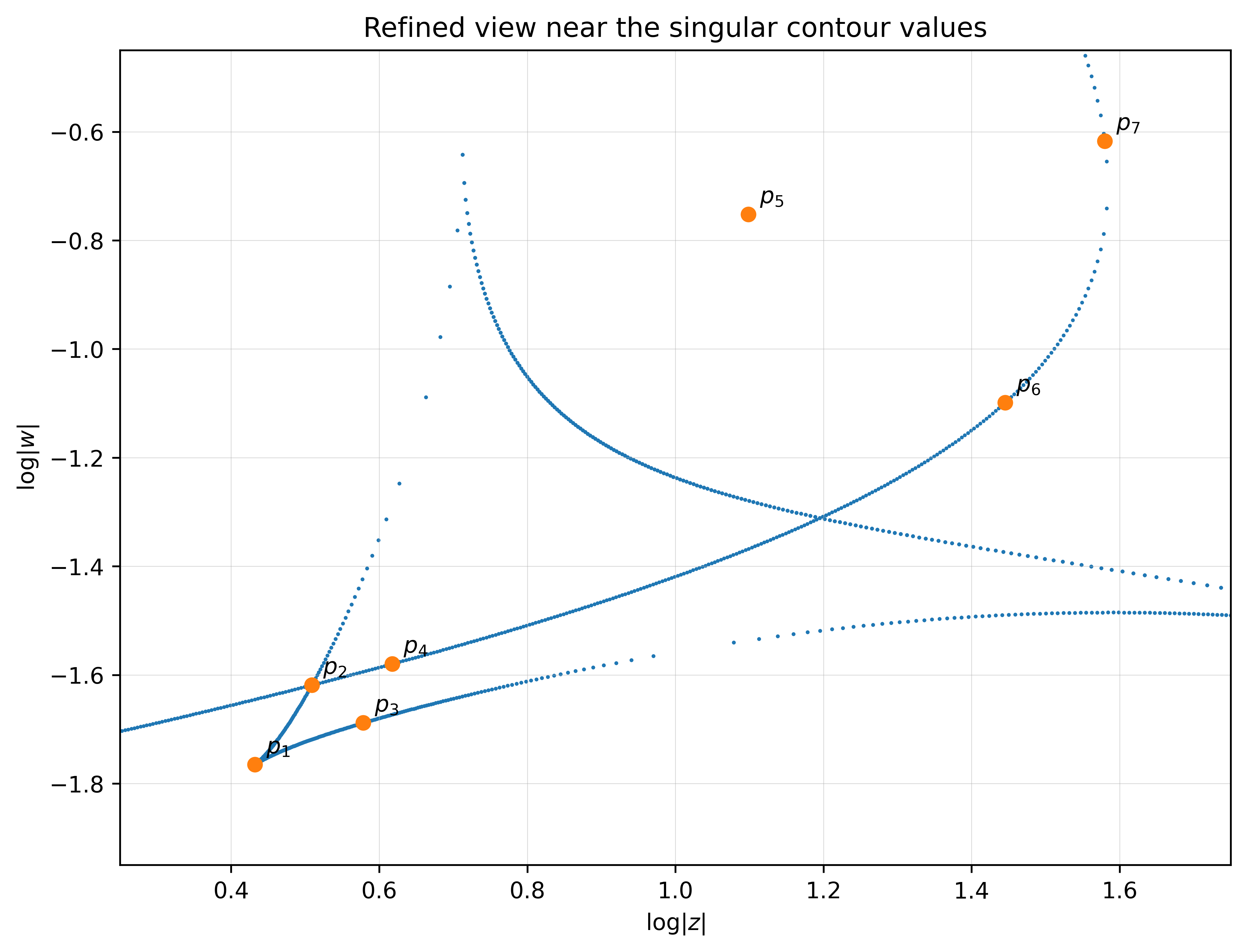}
\caption{Dark-blue amoeba and clear red logarithmic critical values.}
\end{figure}



\section{Maximal Sparsity Does Not Maximize the Number of Contour Singularities}
 
 In general, one cannot prove that a polynomial supported exactly on the
vertices of its Newton polygon has the largest possible number of
singularities of the amoeba contour.  The statement is false, even for a
lattice triangle.
Let
$
\Delta=\operatorname{conv}\{(0,0),(3,0),(0,1)\}.
$
Every polynomial supported exactly on the vertices of $\Delta$ has the
form $g(z,w)=a+bz^3+cw$, where $a,b,c\in\C^\ast$.  Its logarithmic
contour has no finite singular point.  On the other hand, the polynomial
$f(z,w)=w-1+z-z^3$ has the same Newton polygon $\Delta$, but its contour
has an ordinary transverse node at $(0,0)$.  Consequently,
$$
\sup_{\supp(g)=\operatorname{Vert}(\Delta)}
\#\Sing_{\mathrm{isol}}\calC(\calA_g)
=0
<
1
\leq
\sup_{\Newt(f)=\Delta}
\#\Sing_{\mathrm{isol}}\calC(\calA_f).
$$
Thus maximally sparse polynomials do not, in general, maximize the number
of contour singularities.

\subsection*{The contour of a line}

Consider the line
$L=\{1+U+V=0\}\subset(\C^\ast)^2$.  Its logarithmic Gauss map is
$\gamma_L(U,V)=[U:V]$.  A point of $L$ is critical for
$\Log|_L$ exactly when $[U:V]\in\mathbb P^1_\R$.  Since
$V=-1-U$, this condition is equivalent to
$U/(-1-U)\in\R$.
Write $U=x+iy$.  Direct calculation gives
$\di
\operatorname{Im}\left(\frac{U}{-1-U}\right)
=\frac{-y}{|1+U|^2}.
$
Hence the criticality condition is $y=0$.  The critical locus is
therefore parametrized by
$t\in\R\setminus\{-1,0\}$ through
$(U,V)=(t,-1-t)$.  Its logarithmic image is parametrized by
$$
\rho(t)=\bigl(\log|t|,\log|1+t|\bigr).
$$

This parametrization is immersive because
$\rho'(t)=(1/t,1/(1+t))$, and the two components of this vector cannot
vanish simultaneously.  It is also injective.  Indeed, if
$\rho(t)=\rho(s)$, then $|t|=|s|$, so either $s=t$ or $s=-t$.  In the
second case the equality $|1+t|=|1-t|$ implies $t=0$, which is excluded.
Therefore $s=t$.  It follows that the three intervals
$(-\infty,-1)$, $(-1,0)$ and $(0,\infty)$ map to three embedded,
pairwise disjoint real-analytic contour branches.  Therefore,
$
\Sing_{\mathrm{isol}}\calC(\calA_{1+U+V})=\varnothing.
$

\subsection*{Vertex-supported polynomial for triangle Newton polygon}

Let $g(z,w)=a+bz^3+cw$, where $a,b,c\neq0$.  Dividing by $a$ and applying
nonzero coordinate scalings, one can reduce $g=0$ to
$1+u^3+v=0$.  More precisely, choose $\lambda,\mu\in\C^\ast$ satisfying
$(b/a)\lambda^3=1$ and $(c/a)\mu=1$, and put $z=\lambda u$ and
$w=\mu v$.  Multiplication of coordinates by nonzero constants translates
the amoeba and therefore preserves the number and local types of contour
singularities.

Now put $U=u^3$ and $V=v$.  The monomial map
$\Phi(u,v)=(u^3,v)$ is a finite unramified covering of the algebraic
torus.  In logarithmic coordinates it satisfies
$$
\Log\Phi(u,v)=
\begin{pmatrix}3&0\\0&1\end{pmatrix}
\Log(u,v).
$$
The matrix is invertible over $\R$.  Hence the contour of
$1+u^3+v=0$ is obtained from the contour of $1+U+V=0$ by applying the
invertible linear map $(X,Y)\mapsto(X/3,Y)$.  The several points in a
fiber of $\Phi$ have the same logarithmic image, but they do not create
different image branches: locally $\Phi$ is a biholomorphism and all
these lifts project to the same transformed branch.

Since the contour of $1+U+V=0$ is an embedded real-analytic curve, its
image under an invertible linear map is also embedded and real analytic.
It follows that
$
\#\Sing_{\mathrm{isol}}\calC(\calA_g)=0
$
for every polynomial $g$ supported exactly on the three vertices of
$\Delta$.

\subsection*{A polynomial with the same polygon and a nodal contour}

Consider
$
f(z,w)=w-1+z-z^3.
$
Its exponent set is
$\{(0,1),(0,0),(1,0),(3,0)\}$, and therefore
$\Newt(f)=\Delta$.  The exponent $(1,0)$ is a nonvertex lattice point.
The curve $V_f$ is smooth in $\T$ because $f_w=1$ everywhere.

Write $w=p(z)$, where $p(z)=1-z+z^3$.  The two distinct points
$
p_+=(1,1),\, p_-=(-1,1)
$
belong to $V_f$, because $p(1)=p(-1)=1$.  Their logarithmic images
coincide:
$
\Log(p_+)=\Log(p_-)=(0,0).
$

The logarithmic Gauss map of $V_f$ is
$\gamma_f(z,w)=[zf_z:wf_w]=[z(1-3z^2):w]$.  At the two points one has
$$
\gamma_f(p_+)=[-2:1],\qquad
\gamma_f(p_-)=[2:1].
$$
Both values belong to $\mathbb P^1_\R$.  Hence $p_+$ and $p_-$ are
distinct logarithmic critical points lying over the same contour value.

It remains to prove that the two resulting contour branches are regular
and transverse.  On the curve $w=p(z)$, use the affine logarithmic Gauss
coordinate
$$
h(z)=\frac{z(1-3z^2)}{1-z+z^3}.
$$
Its numerator is $N(z)=z-3z^3$ and its denominator is
$D(z)=1-z+z^3$.  Since
$N'(z)=1-9z^2$ and $D'(z)=-1+3z^2$, direct differentiation gives
$
h'(1)=-4,\,  h'(-1)=-12.
$
In particular, $h'$ does not vanish at either point.  Therefore
$h^{-1}(\R)$ is a smooth real-analytic curve near $z=1$ and near
$z=-1$.  Because $h$ has real coefficients and maps the real axis to
$\R$, this local critical curve is precisely the real axis near each of
these two points.

Parametrize the first local critical branch by real $t$ near $1$ and the
second by real $t$ near $-1$.  Their logarithmic images are both given
locally by
$r(t)=(\log|t|,\log|p(t)|)$.  Since
$p'(t)=-1+3t^2$, the tangent vectors at the two critical preimages are
$$
r'(1)=\left(1,\frac{p'(1)}{p(1)}\right)=(1,2),
\qquad
r'(-1)=\left(-1,\frac{p'(-1)}{p(-1)}\right)=(-1,2).
$$
Their determinant is
$$
\det\begin{pmatrix}1&-1\\2&2\end{pmatrix}=4\neq0.
$$
Thus both projected branches are regular and their tangent lines are
distinct.  The common contour value $(0,0)$ is therefore an ordinary
transverse node.

\begin{theorem}
For
$\Delta=\operatorname{conv}\{(0,0),(3,0),(0,1)\}$, every polynomial
whose support is exactly $\operatorname{Vert}(\Delta)$ has a contour
without finite singularities, whereas
$f(z,w)=w-1+z-z^3$ has Newton polygon $\Delta$ and its contour has an
ordinary node at $(0,0)$.  Hence the supremum of the number of isolated
contour singularities over all polynomials with Newton polygon $\Delta$
is not attained in the vertex-supported subfamily.
\end{theorem}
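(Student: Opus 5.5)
The plan is to assemble the theorem from the three computations carried out just before it, in two halves.

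\emph{Vertex-supported half.} Take $g=a+bz^3+cw$ with $a,b,c\in\C^\ast$.
\begin{itemize}
\item Rescale the coordinates by $z=\lambda u$, $w=\mu v$ with $(b/a)\lambda^3=1$ and $(c/a)\mu=1$. This reduces $g=0$ to $1+u^3+v=0$.
\item Coordinate scaling translates $\Log$ by the constant vector $(\log|\lambda|,\log|\mu|)$. It maps $V_g$ biholomorphically onto the new curve and intertwines the critical loci, so it preserves the number and type of contour singularities.
\item Apply the monomial covering $\Phi(u,v)=(u^3,v)$. Since $\Phi$ is a local biholomorphism of the torus, it maps $\Sigma$ of $\{1+u^3+v=0\}$ onto $\Sigma$ of the line $L=\{1+U+V=0\}$, because the logarithmic Gauss maps correspond: $[3u^3:v]$ is real iff $[U:V]$ is real.
\item Using $\Log\circ\Phi=\operatorname{diag}(3,1)\Log$, conclude that $\CA_{V_g}$ is the image of $\CA_L$ under the invertible linear map $(X,Y)\mapsto(X/3,Y)$, up to translation.
\end{itemize}
It then remains to invoke the line computation. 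The criticality condition is $\Ima U=0$, and the contour is parametrized by $\rho(t)=(\log|t|,\log|1+t|)$ on three intervals. This map is immersive and injective, so by the local embedding lemma each branch is embedded. One also has to check that the branches do not accumulate on one another at finite points. Along each interval $\rho$ is proper onto its image: as $t\to0$, $t\to-1$ or $t\to\pm\infty$, $|\rho(t)|\to\infty$. So the contour is a closed embedded curve, and linear images keep it embedded; hence there are no finite singular points.

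\emph{Node half.} Take $f=w-1+z-z^3$.
\begin{itemize}
\item Its support contains $(0,0),(3,0),(0,1)$, so $\Newt(f)=\Delta$. Smoothness follows from $f_w=1$.
\item Eliminate $w=p(z)$ and use the critical points $(\pm1,1)$, where $\gamma_f=[\mp2:1]$ is real.
\item Using the affine Gauss coordinate $h(z)$ with $h'(\pm1)\neq0$, the critical locus near each point is the preimage of $\R$ under a local submersion. That preimage is a smooth curve, and by reality of the coefficients it equals the real axis.
\item The tangent vectors are $r'(1)=(1,2)$ and $r'(-1)=(-1,2)$, which are independent. So the two immersed branches through $(0,0)$ are distinct germs, and by the argument in the multiple-lift part of the plane-curve computation theorem, $(0,0)\in\Sing(\CA_{V_f})$ as a transverse node.
\end{itemize}
Isolation of this singular point, which is needed to count it in $\Sing_{\mathrm{isol}}$, comes from transversality. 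Near $(0,0)$, two transverse embedded arcs meet only at $(0,0)$. Any other critical lifts over nearby points would have to be excluded: I would check that the fiber over $(0,0)$ contains only these two critical points, by solving $|z|=|p(z)|=1$ together with reality of $h$, and then apply the isolation lemma with local properness. Local properness holds because critical points of bounded logarithmic image stay in a compact subset of $V_f$.

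The main obstacle is precisely this last point: ruling out nonreal critical lifts over $(0,0)$ or accumulating near it. This requires a small explicit computation on $|z|=1$, $|1-z+z^3|=1$, which I would try first via $z=e^{i\theta}$ and a trigonometric polynomial. Even if extra lifts existed, the two transverse branches alone would already force $(0,0)$ to be singular. Only isolation would then need the finiteness of the fiber, which follows from the critical equations on $|z|=1$ having finitely many solutions. Combining the two halves gives the strict inequality between the suprema.
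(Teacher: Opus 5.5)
Your proposal is correct and follows the paper's proof. The vertex-supported case is reduced to the line $1+U+V=0$ by scaling and the monomial cover $(u,v)\mapsto(u^3,v)$, and the node comes from the two regular critical lifts $(\pm1,1)$ with tangent determinant $4$.

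The properness of $\rho$ and the check of the critical fiber over $(0,0)$, which you add, are steps the paper leaves implicit, and the fiber check does go through. On $|z|=1$ the condition $|1-z+z^3|=1$ becomes $(2\cos\theta-1)(\cos^2\theta-1)=0$. At the extra points $z=e^{\pm i\pi/3}$ one has $w=-z$ and $h(z)=-1-3\bar z\notin\R$, so the critical fiber over $(0,0)$ is exactly $\{(\pm1,1)\}$.
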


\begin{proof}
The vertex-supported assertion follows from the reduction to the line
$1+U+V=0$ and the injective immersion $\rho$ proved above.  The polynomial
$f=w-1+z-z^3$ is smooth, has Newton polygon $\Delta$, and has two
distinct regular critical lifts $p_+$ and $p_-$ over $(0,0)$.  Their
projected tangent determinant equals $4$, so $(0,0)$ is an ordinary
node.  The strict inequality between the two suprema follows.
\end{proof}

 
The following amoebas and their contours are amoebas of plane curves with defining polynomials having the same Newton polygon $\Delta$ with vertices $\operatorname{Vert}(\Delta) = \{(0,0), (3, 0), (3, 1), (1, 3), (0, 3)\}$, but with different coefficients (the coefficients here are all real).

 \begin{figure}[ht]
\centering
\includegraphics[width=.35\textwidth]{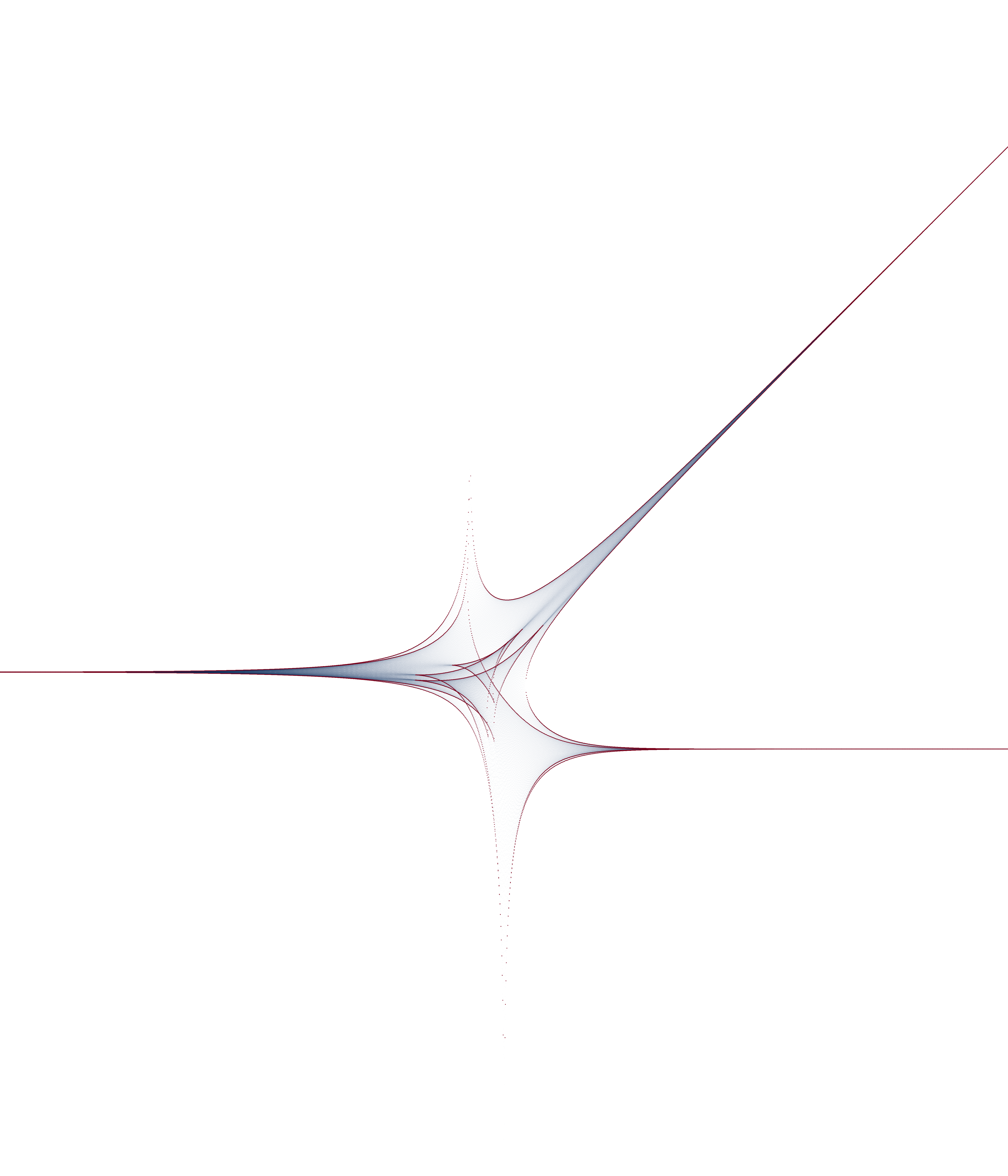}\qquad
\includegraphics[width=.35\textwidth]{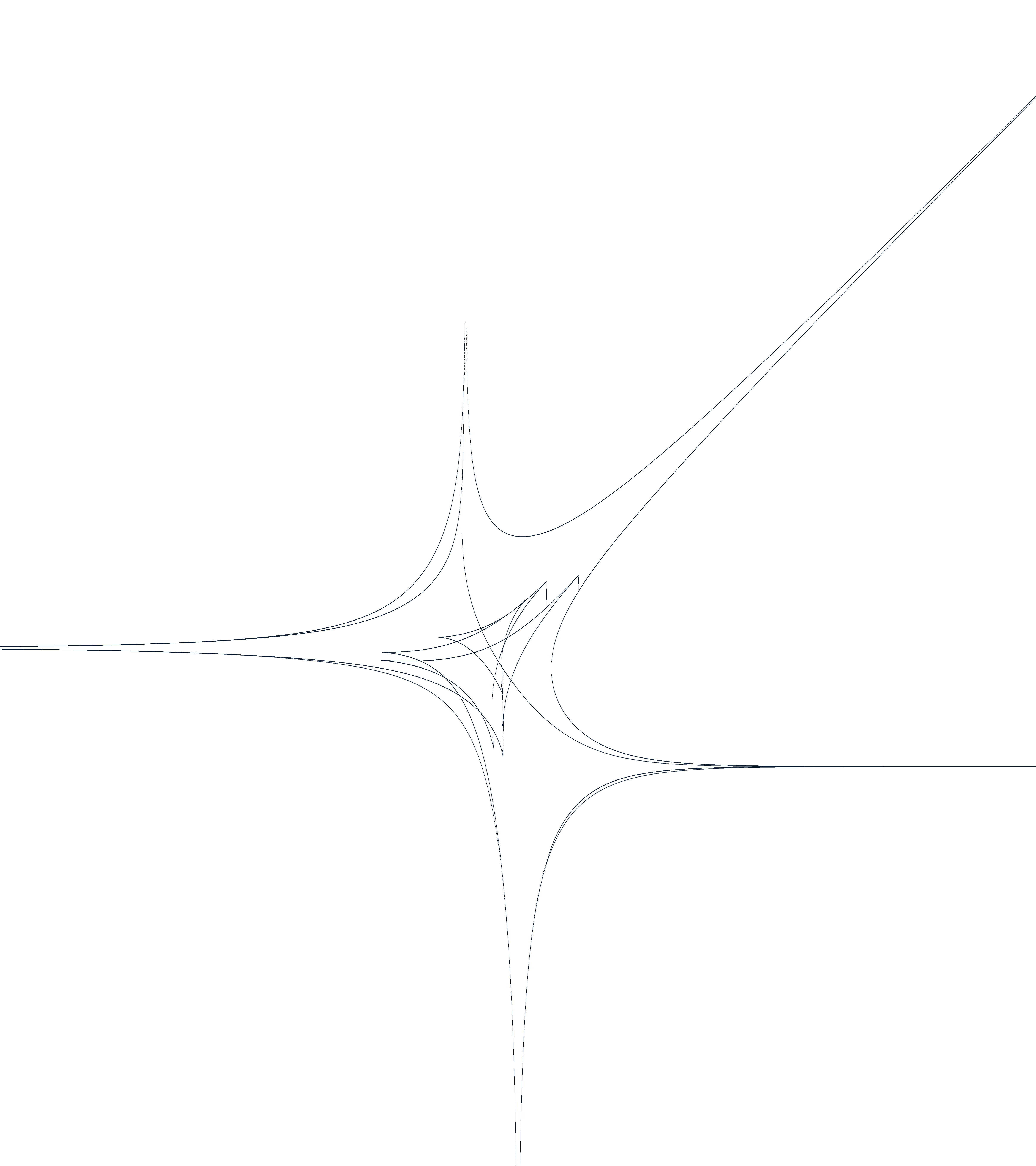}
\caption{Dark-color amoeba and clear red logarithmic critical values of the polynomial $f(z,w)=2+2z^3+2w^3-5z^3w-3zw^3+zw^2-3zw+3z^2w$.}
\end{figure}

 \begin{figure}[ht]
\centering
\includegraphics[width=.3\textwidth]{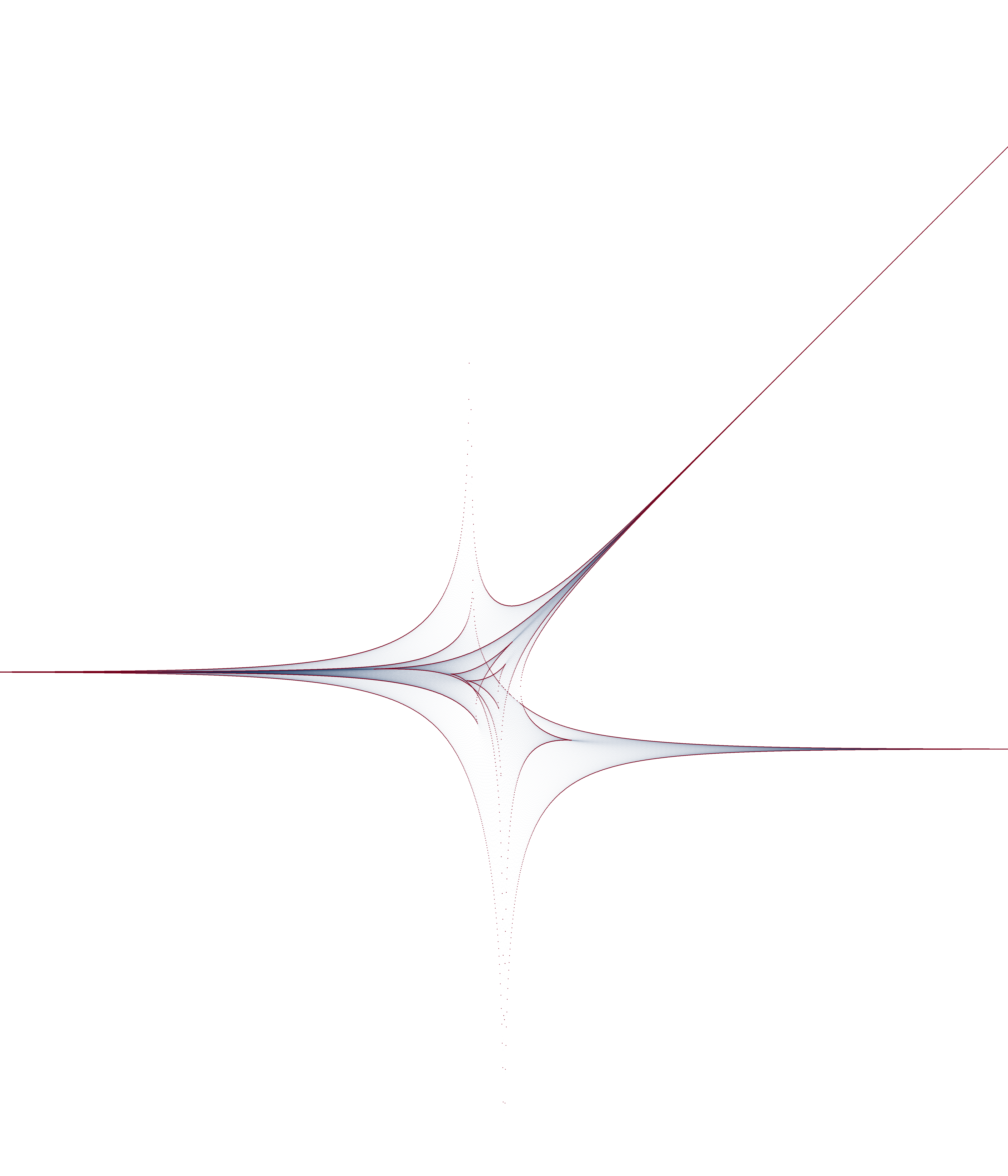}\qquad
\includegraphics[width=.3\textwidth]{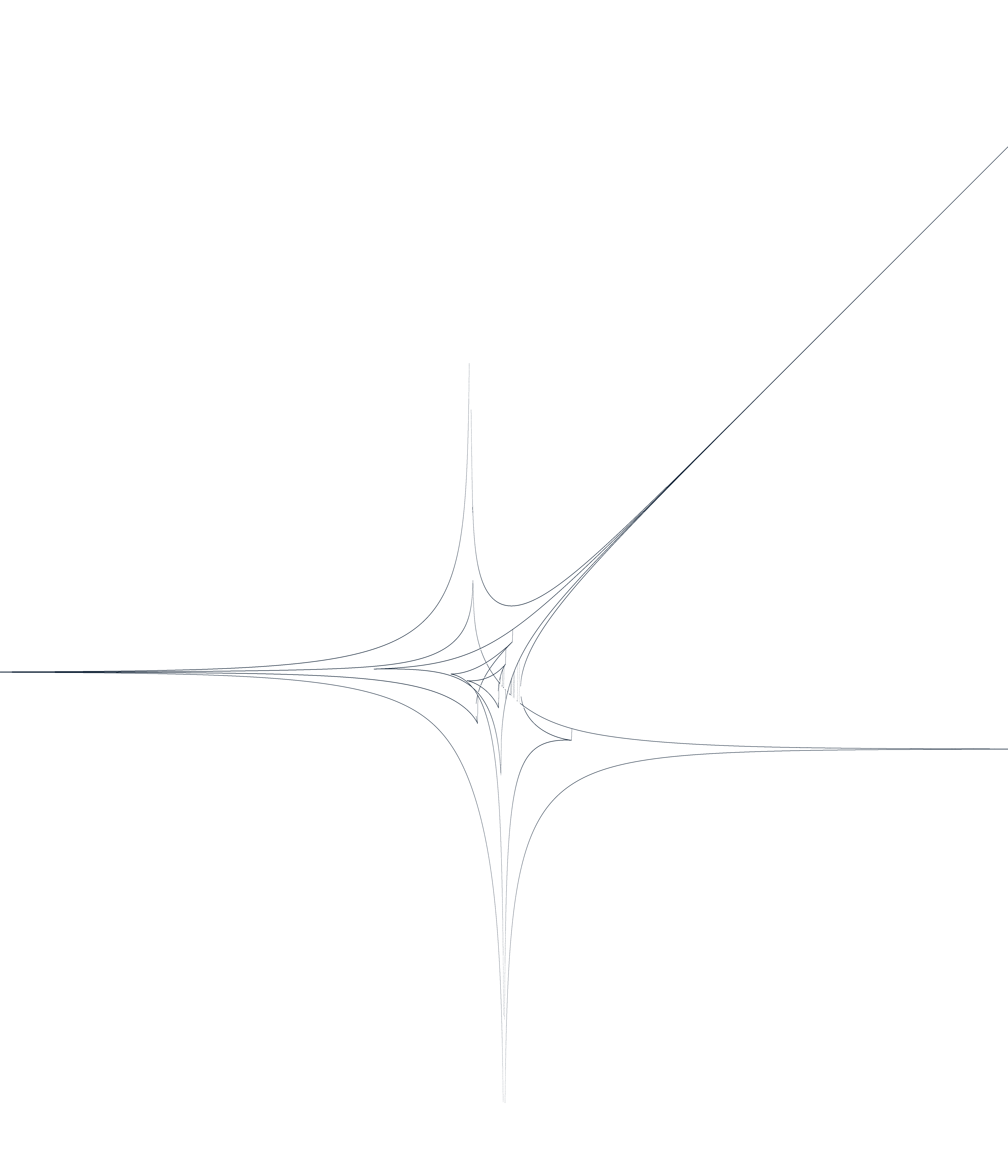}
\caption{Dark-color amoeba and clear red logarithmic critical values of the maximally spares polynomial $f(z,w)=2+2z^3+2w^3-5z^3w-3zw^3$.}
\end{figure}

\begin{figure}[ht]
\centering
\includegraphics[width=.3\textwidth]{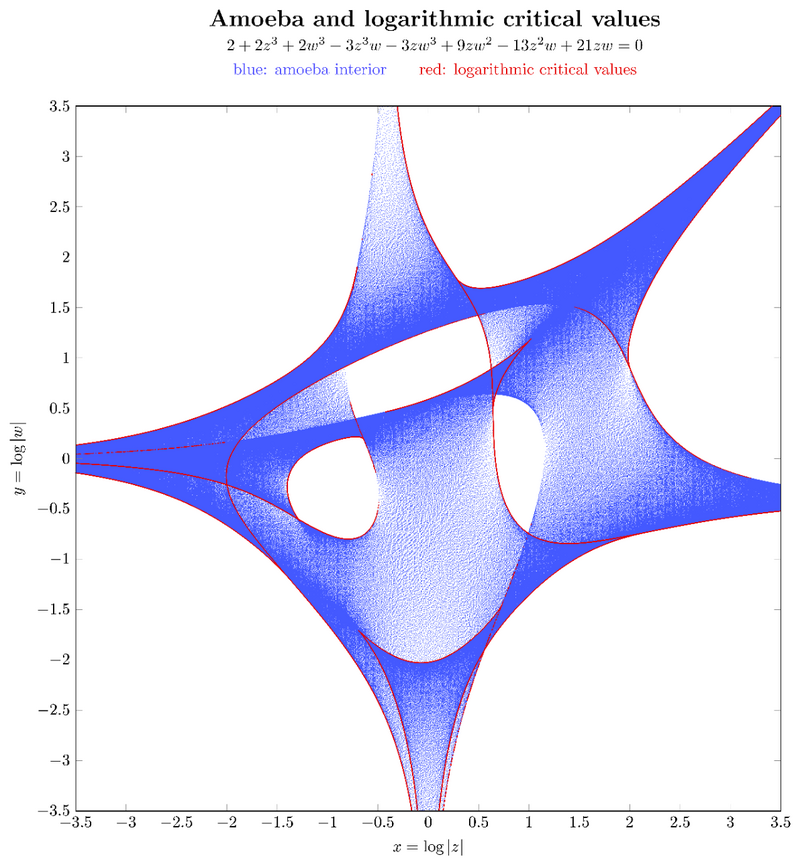}\qquad
\includegraphics[width=.3\textwidth]{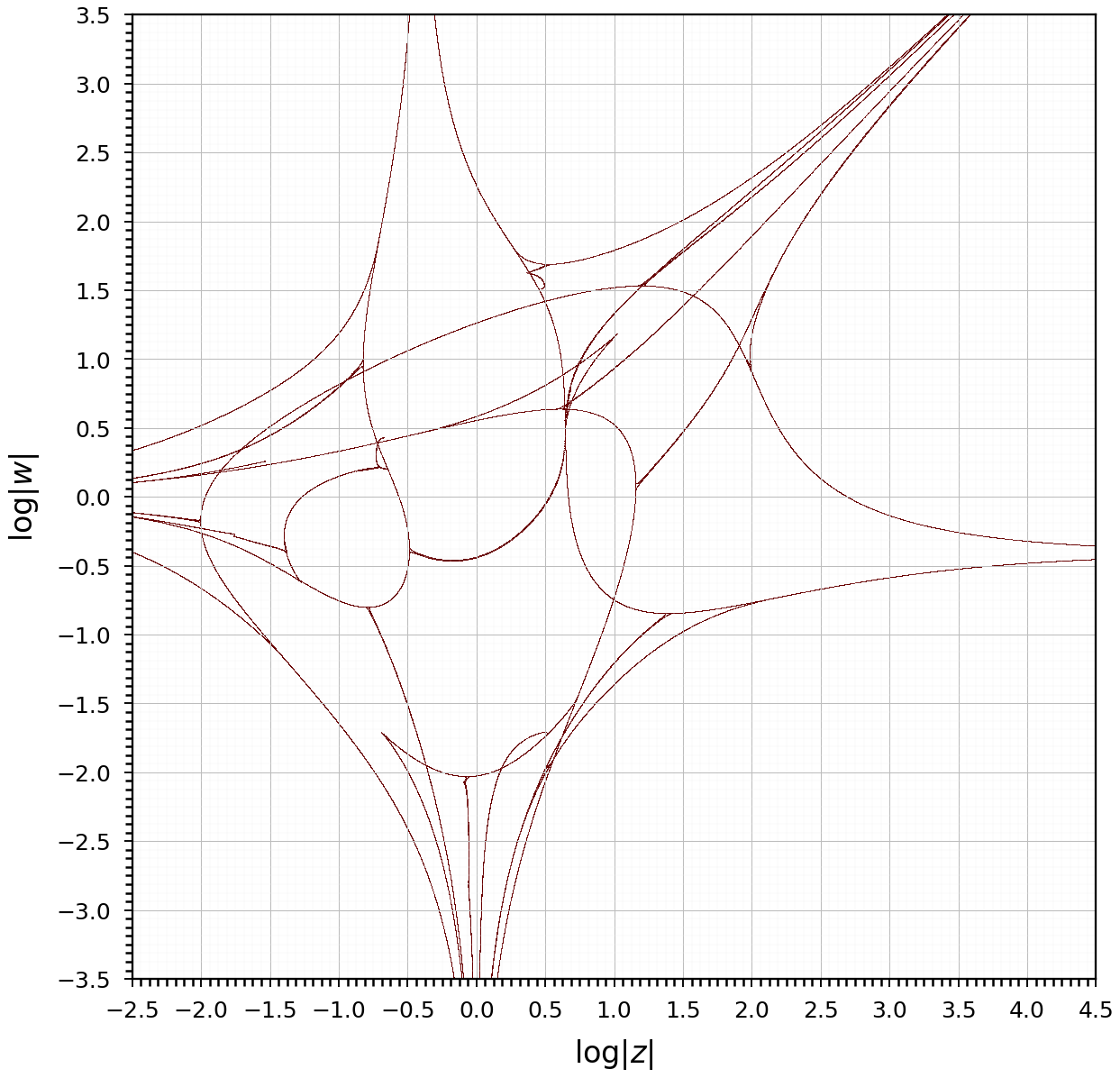}
\end{figure}

 \begin{figure}[ht]
\centering
\includegraphics[width=0.3\textwidth]{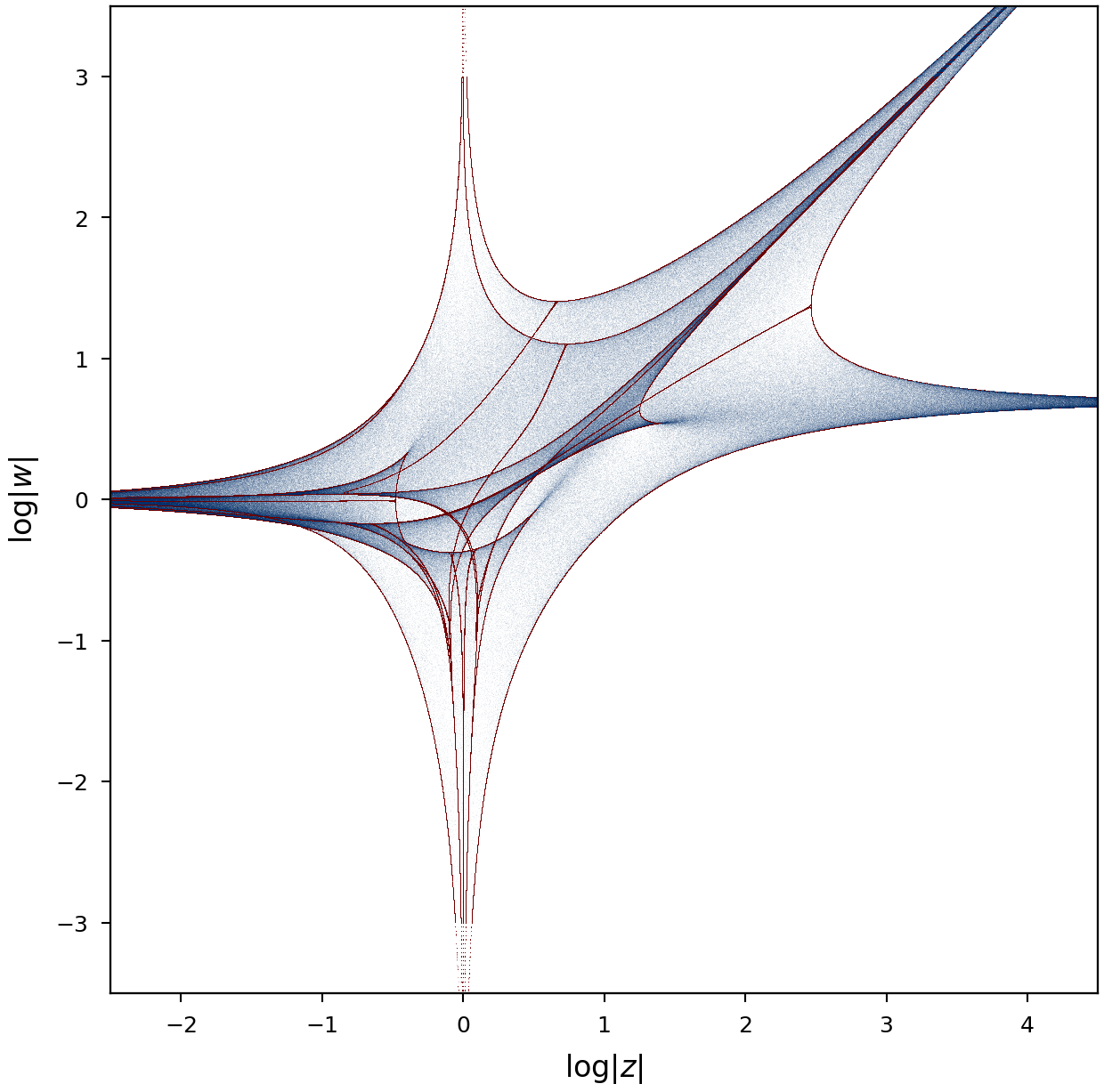}
\includegraphics[width=0.3\textwidth]{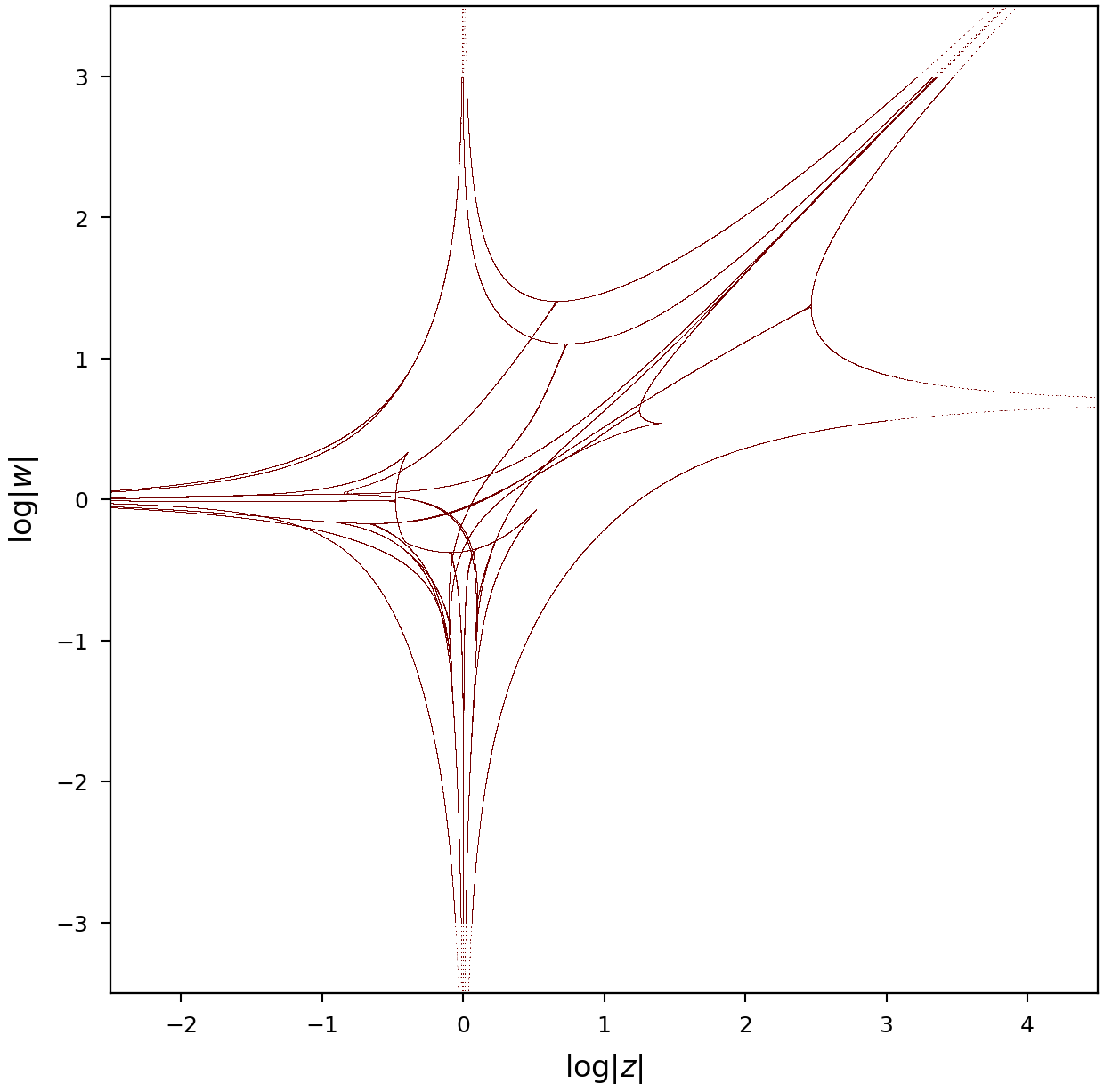}
\includegraphics[width=0.5\textwidth]{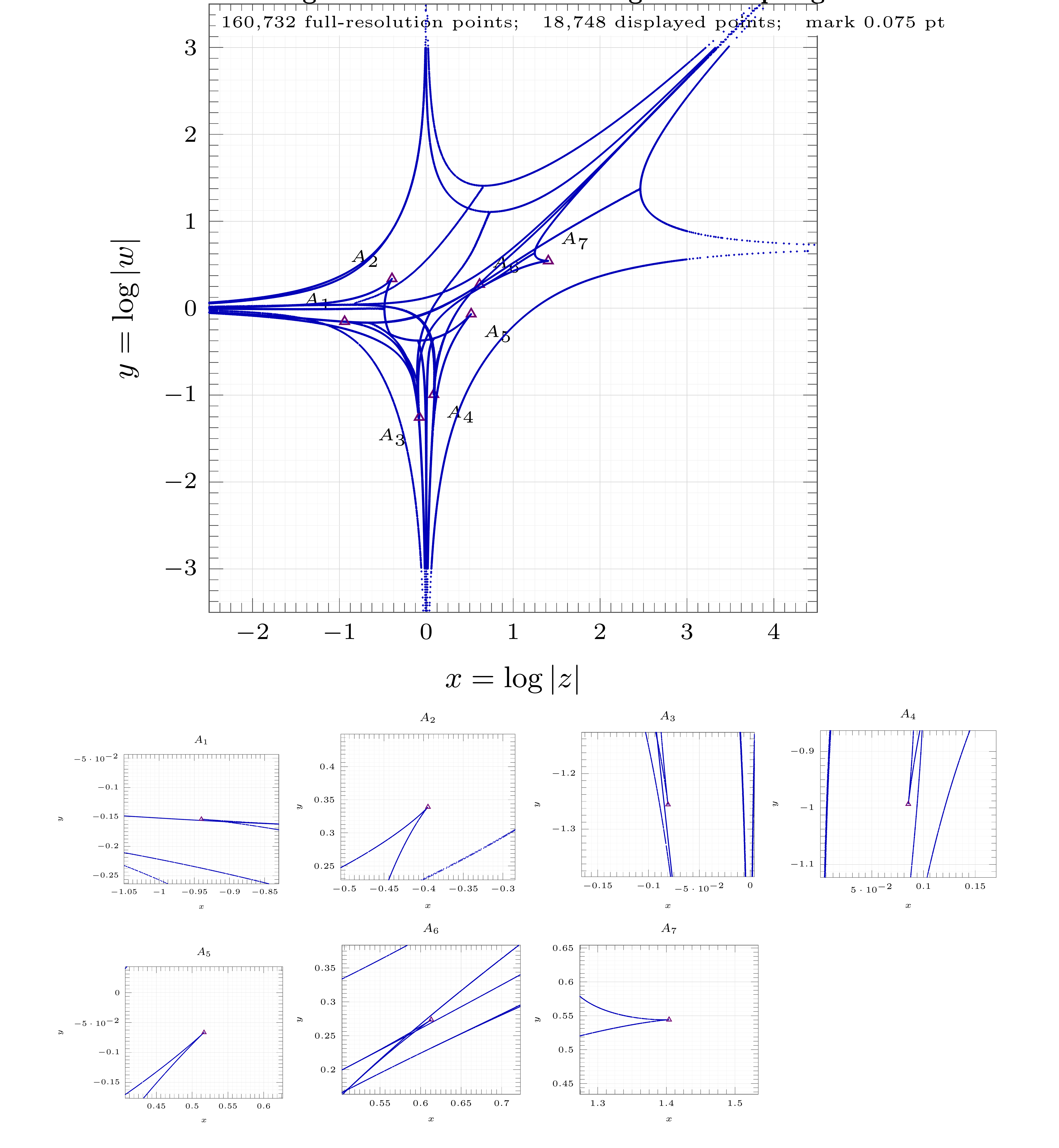}
\includegraphics[width=0.3\textwidth]{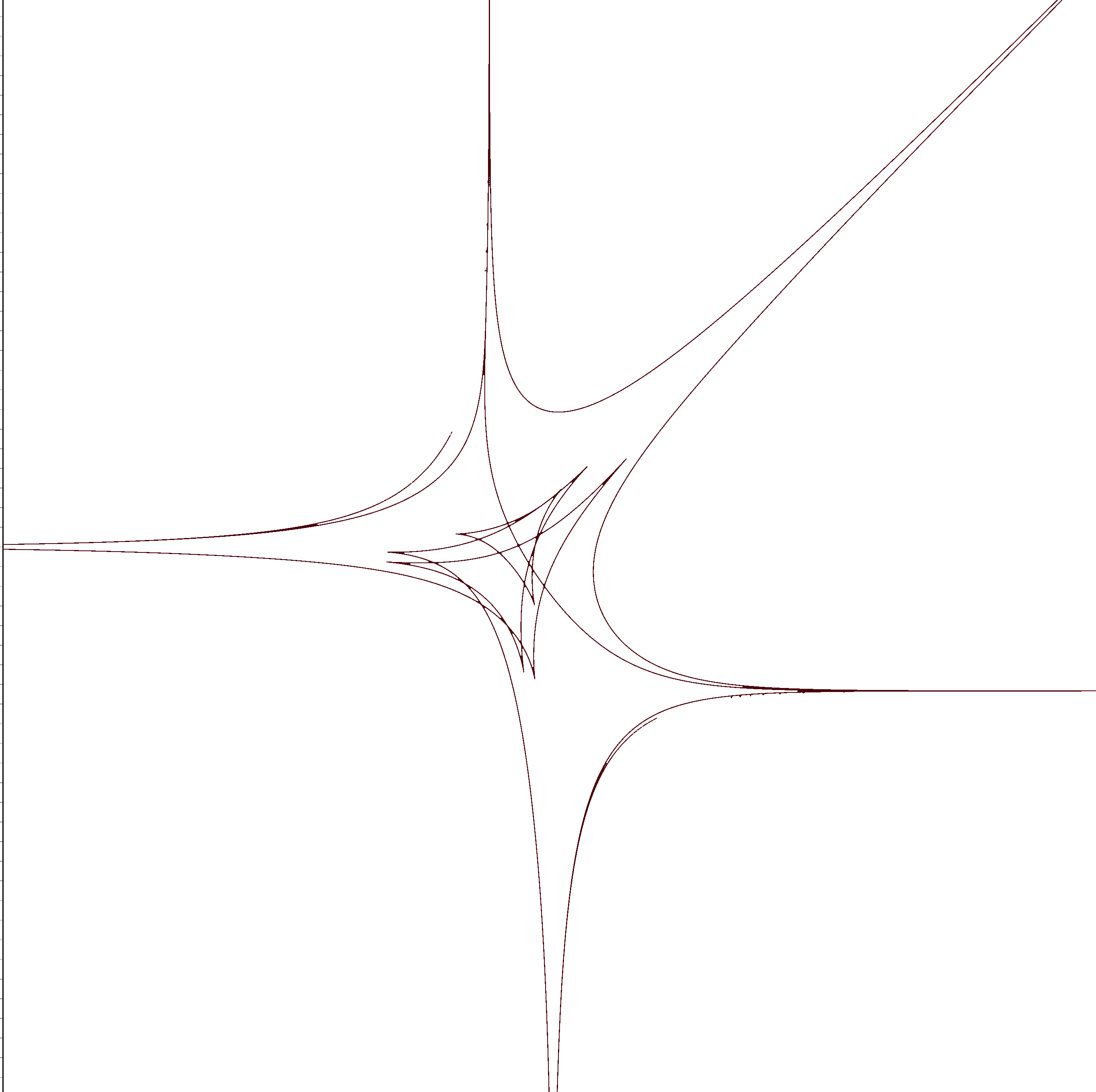}
\end{figure}

\end{document}